\newtheorem{theorem}{Theorem}
\newtheorem{lemma}{Lemma}
\newtheorem{proposition}{Proposition}
\newtheorem{corollary}{Corollary}
\newtheorem{definition}{Definition}
\theoremstyle{remark}
\newtheorem{remark}{Remark}[section]
\numberwithin{equation}{section}
\newcommand{\C}{{\mathbb C}}       % Field of complex numbers
\newcommand{\R}{{\mathbb R}}       % Field of real numbers
\newcommand{\Z}{{\mathbb Z}}       % Ring of integer numbers
\newcommand{\N}{{\mathbb N}}
\newcommand{\internalcomment}[1]{}
\begin{document}

\title[Geometric conditions for an interpolation formula]
{Geometric conditions for the reconstruction of a holomorphic function by an interpolation formula}

\author{Amadeo Irigoyen}

%\address{Universitat de Barcelona, Gran Via de les Corts Catalanes, 585,
%08007 Barcelona, Spain}

%\email{
%\begin{minipage}[t]{5cm}
%amadeo.irigoyen@ub.edu
%\end{minipage}
%}

%\thanks{Supported by projects MTM2008-05561-C02-01 and
%2005SGR00611}

\begin{abstract}

We give here some precisions and improvements about
the validity of the explicit reconstruction of any
holomorphic function on a ball of $\mathbb{C}^2$ from
its restrictions on a family of complex lines.
Such validity depends on the mutual distribution of
the lines. This condition can be geometrically
described and is equivalent to a stronger stability
of the reconstruction formula in terms of permutations
and subfamilies of these lines. The motivation of 
this problem comes from possible applications 
in mathematical economics and medical imaging.

\end{abstract}

\maketitle

\tableofcontents

\section{Introduction}

\subsection{Setting of the problem and some reminders}

In this paper we give some answers and improvements of the results
from~\cite{amadeo4}, where we deal with a special case of the
general problem of reconstruction of
a holomorphic function from its restrictions on a family of analytic
submanifolds. Here the setting is the following:
on the one hand, we consider for the analytic submanifolds any family of complex lines
in $\C^2$ that cross the origin.
Such a family can be written as
\begin{eqnarray}\label{deflines}
\left\{z\in\mathbb{C}^2,\,
z_1-\eta_jz_2=0
\right\}_{j\geq1}
\,,
\end{eqnarray}
where the directions $\eta_j\in\mathbb{C}$
are all different (we omit the special
line $\{z_2=0\}$).
On the other hand, let be $f\in\mathcal{O}\left(\C^2\right)$
(resp. $f\in\mathcal{O}\left(B_2\left(0,r_0\right)\right)$
where for any fixed $r_0>0$, 
$B_2\left(0,r_0\right)\subset\C^2$ is the complex ball defined as
\begin{eqnarray*}
B_2\left(0,r_0\right)
& = &
\left\{
z\in\C^2\,,\;
\left|z_1\right|^2+\left|z_2\right|^2<r_0^2
\right\}
\;)
\,.
\end{eqnarray*}
We then want to give an effective reconstruction of $f$ from
its restrictions on these complex lines. An application of some
methods from~\cite{berndtsson} yields the following interpolation formula,
that we remind from~\cite{amadeo3} and~\cite{amadeo4}:
\begin{eqnarray}\label{formula}
& &
E_N(f;\eta)(z)
\;:=\;
\sum_{p=1}^N
\left(
\prod_{j=p+1}^N
(z_1-\eta_j z_2)
\right)
\sum_{q=p}^N
\frac{1+\eta_p\overline{\eta_q}}{1+|\eta_q|^2}
\frac{1}
{\prod_{j=p,j\neq q}^N(\eta_q-\eta_j)}
\times
\\\nonumber
& &
\;\;\;\;\;\;\;\;\;\;\;\;\;\;\;\;\;\;\;\;\;\;\;\;
\times
\sum_{m\geq N-p}
\left(
\frac{z_2+\overline{\eta_q}z_1}{1+|\eta_q|^2}
\right)^{m-N+p}
\frac{1}{m!}
\frac{\partial^m}{\partial v^m}|_{v=0}
[f(\eta_qv,v)]
\,,
\end{eqnarray}
where $N\geq1$ and $z=(z_1,z_2)\in\mathbb{C}^2$.
%and
%$f(z)=\sum_{k,l\geq0}
%z_1^kz_2^l$ is its Taylor expansion.
We know (see Proposition~3 from~\cite{amadeo4}) that
for all $N\geq1$
and $f\in\mathcal{O}\left(\C^2\right)$
(resp. $f\in\mathcal{O}\left(B_2\left(0,r_0\right)\right)$), 
$E_N(f;\eta)$ is well-defined and satisfies the following properties:

\begin{itemize}

\item

$E_N(f;\eta)\in\mathcal{O}\left(\mathbb{C}^2\right)$
(resp. $E_N(f;\eta)\in\mathcal{O}\left(B_2(0,r_0)\right)$);

\item

%$\forall\,N\geq1$,
$E_N(f;\eta)$ is an explicit formula that is constructed with the data
\begin{eqnarray*}
\left\{f_{|\{z_1=\eta_jz_2\}}\right\}_{1\leq j\leq N}
\;;
\end{eqnarray*}

\item

%$\forall\,N\geq1$, 
$\forall\,j=1,\ldots,N$,
$E_N(f;\eta)_{|\{z_1=\eta_jz_2\}}=f_{|\{z_1=\eta_jz_2\}}$;

\item

$\forall\,P\in\mathbb{C}[z_1,z_2]$ with
$\deg P\leq N-1$,
$E_N(P;\eta)\equiv P$.

\end{itemize}

One reason for the choice of a family of lines~(\ref{deflines}) is that it is
well suited for the methods in~\cite{berndtsson}, which readily produce
formula~(\ref{formula}). But the essential reason comes from
possible applications to the
real Radon transform theory, that may have
consequences in mathematical economics and medical imaging.
Indeed, let $\mu$ be a measure with compact support
$K\subset\R^2$ (w.l.o.g. one can assume that $0\in K$). 
We want to reconstruct it from
its Radon transforms on a finite number of directions, i.e. from
$(\mathcal{R}\mu)\left(\theta^{(j)},s\right)$ with
$\left(\theta^{(j)},s\right)\in\mathbb{S}^{1}\times\R$ and
$j=1,\ldots,N$, where $\mathbb{S}^{1}$ is the unit sphere of $\R^2$ and
\begin{eqnarray}\label{defradon}
(\mathcal{R}\mu)\left(\theta^{(j)},s\right) & := &
\frac{\partial}{\partial s}
\int_{\{x\in\R^2,\,\theta_1^{(j)}x_1+\theta_2^{(j)}x_2\leq\,s\}}
\mu(dx)
\;.
\end{eqnarray}
As it was explained at the Introduction of~\cite{amadeo4},
we consider the Fantappie transform $\Phi_{\mu}$ of $\mu$, that is defined
on the dual space
\\$K^{\star}:=
\left\{
\xi=[\xi_0:\xi_1:\xi_2]\in\C {\bf P}^2,\,
<\xi,x>\neq0,\,
\forall\,x\in K
\right\}$
and is holomorphic there.
Explicitly,
\begin{eqnarray*}
\Phi_{\mu}\;:\;\xi\in K^{\star}
& \mapsto &
<\mu,\dfrac{\xi_0}{<\xi,x>}>
\;:=\;\int_{x\in K}
\dfrac{\xi_0}{<\xi,x>}\mu(dx)
\end{eqnarray*}
(see~\cite{martineau}).
In addition, we know that there is $r_K>0$ such that for all
$\theta\in\mathbb{S}^{1}$ and all $u\in\C$ with $|u|<r_K$
(so that $[1:u\theta_1:u\theta_2]\in K^{\star}$),
\begin{eqnarray*}
\Phi_{\mu}([1:u\theta_1:u\theta_2]) & = &
\int_{-\infty}^{+\infty}
\frac{(\mathcal{R}\mu)(\theta,s)}{1+s\,u}\,ds
\,,
\end{eqnarray*}
i.e. the knowledge of 
$(\mathcal{R}\mu)(\theta^{(j)},s),\,j=1,\ldots,N,\,s\in\R$,
allows to know the restriction of
$\Phi_{\mu}\in\mathcal{O}\left(B_2(0,r_K)\right)$
on every line 
$L_{\theta^{(j)}}=\{(u\theta_1,u\theta_2),\,u\in\C\}
=\{z\in\C^2,\,z_1=\eta_jz_2\}$
where 
\begin{eqnarray}\label{etaradon}
\eta_j
& = &
\theta_1^{(j)}/\theta_2^{(j)}
\;\in\R,\;j=1,\ldots,N
\end{eqnarray}
(w.l.o.g. one can assume that $\theta^{(j)}_2\neq0$ for all $j=1,\ldots,N$).

The family of measures defined for $N\geq1$ by
$\mu_N:=\Phi^{-1}\left[E_N(\Phi_{\mu};\eta)\right]$
(where $E_N(\cdot;\eta)$ is the above fomula~(\ref{formula}),
and $\Phi^{-1}$ is the reciprocal isomorphism, whose
existence is guaranteed by~\cite{martineau}),
is interpolating in the meaning that
$<\mu_N,x_1^kx_2^l>=\\<\mu,x_1^kx_2^l>$
for all $N\geq1$ and $k,\,l\geq0$ with
$k+l\leq N$. 
Since by~(\ref{etaradon}) the set 
$\left\{\eta_j\right\}_{j\geq1}$ is a subset of $\R$,
by an application of Theorem~\ref{theorem2} below,
we will conclude that the family $\left\{\mu_N\right\}_{N\geq1}$
will approximate $\mu$ in an appropriate topology. 
In addition, an application of some results of Henkin and Shananin from~\cite{hensha2}
will allow to compute the reconstruction with good estimates.
These expected results are handled in~\cite{irigoyen5}, that is currently in progress.

\medskip

\subsection{Essential results}

The essential problem is that there is no guarantee that, as
$N\rightarrow\infty$, $E_N(f;\eta)$ will
converge to $f$ (although it coincides
with $f$ on an increasing number of lines).
We know from~\cite{amadeo4} that in general it is not the case,
i.e. there are families of lines with (at least)
an associated holomorphic function $f$ such that
$E_N(f;\eta)$ will not converge. 
Since we are interested in a reconstruction formula
whose convergence is guaranted for every holomorphic function
$f$,
we want to determine all the {\em good} families of lines 
$\eta=\left(\eta_j\right)_{j\geq1}$
for which the convergence of the associated interpolation
formula $E_N(\cdot;\eta)$ is guaranteed for every holomorphic function.
Theorems~1 and~2 from~\cite{amadeo4} give equivalent criteria
for the validity of such a reconstruction: roughly speaking, the sequence
of the directions $\left(\eta_j\right)_{j\geq1}$ of the lines~(\ref{deflines})
must satisfy an exponential estimate of their divided differences
(an operator of successive discrete derivatives, see for example~\cite{boor},
\cite{guelfond} and~\cite{steffensen1} for the definition
and essential results).
Nevertheless, the difficulty to check this condition on divided differences
gives us the motivation to find a criterion that is easier
to understand. This leads to the following definition:

\begin{definition}\label{intercurve}

The set $\{\eta_j\}_{j\geq1}$ is
{\em locally interpolable by real-analytic curves} if,
for all
$\zeta\in\overline{\{\eta_j\}_{j\geq1}}$
(the topological closure of
$\{\eta_j\}_{j\geq1}$ in $\C{\bf P}^1$),
there exist a neighborhood
$V$ of $\zeta$ and
a smooth real-analytic curve
$\mathcal{C}$ such that
$\zeta\in\mathcal{C}$ and
\begin{eqnarray}\label{geom}
V\bigcap\left\{\eta_j\right\}_{j\geq1}
& \subset &
\mathcal{C}
\,.
\end{eqnarray}

\end{definition}

This new geometric condition is a sufficient criterion for the convergence of the interpolation
formula $E_N(\cdot;\eta)$ and yields the following result, given as Theorem~3 from~\cite{amadeo4}.

\begin{theorem}\label{theorem2}

If $\{\eta_j\}_{j\geq1}$ is locally interpolable by real-analytic curves,
then the interpolation formula
$E_N(f,\eta)$ converges to $f$ uniformly on any compact
$K\subset\C^2$ and for all
$f\in\mathcal{O}\left(\C^2\right)$.

Similarly, $r_0$ being given, there is $\varepsilon_{\eta}>0$
such that, for all $f\in\mathcal{O}\left(B_2(0,r_0)\right)$,
$E_N(f;\eta)$ converges to $f$ uniformly on any compact subset
$K\subset B_2(0,\varepsilon_{\eta}r_0)$.

\end{theorem}

Nevertheless, this new criterion
is not equivalent. Indeed, as it has been suspected
in the Introduction of~\cite{amadeo4}, there are sequences of lines
that are not locally interpolable by real-analytic curves
and whose associated formula $E_N(\cdot;\eta)$ converges. 
Proposition~\ref{propctrex} below is the first result of this paper and
gives an explicit example of such a family:
it consists on constructing a sequence 
$(\eta_j)_{j\geq1}$ as the increasing union of $1/2^r$-nets, $r\geq0$, of the square
$[0,1]+i[0,1]=\left\{z\in\C,\,0\leq\Re(z),\,\Im(z)\leq1\right\}$
(so that for all $N\geq1$,
the first $N$ points $\eta_j$'s are the {\em most} separated
possible from each other).

\begin{proposition}\label{propctrex}

There exists (at least) one sequence
$\left(\eta_j\right)_{j\geq1}$ that is not locally interpolable by real-analytic curves
but whose associated interpolation
formula $E_N(\cdot;\eta)$ converges, i.e.
for all $f\in\mathcal{O}\left(\C^2\right)$, $E_N(\cdot;f)$ converges to $f$ uniformly on any
compact subset $K\subset \C^2$ (similarly, $r_0>0$ being fixed,
there is $\varepsilon_{\eta}>0$ such that for all
$f\in\mathcal{O}\left(B_2\left(0,r_0\right)\right)$,
$E_N(f;\eta)$ converges to $f$ uniformly on any compact subset
$K\subset B_2(0,\varepsilon_{\eta}r_0)$).

\end{proposition}

This first conclusion leads to the following question:
why is this geometric criterion not (always) necessary?
On the other hand, the expression~(\ref{formula}) of $E_N(\cdot;\eta)$
clearly involves the enumeration of the lines $\eta_j$'s.
Since Definition~\ref{intercurve} is a condition about sets that
does not depend on any of its enumerations, one is tempted into considerating
the action of the
group of permutations
$\mathfrak{S}_{\N}$ and check the validity of the convergence
of $E_N(\cdot;\sigma(\eta))$, where
the sequence $\sigma(\eta)$ is defined from $\eta=\left(\eta_j\right)_{j\geq1}$ by
\begin{eqnarray}
\sigma(\eta)
& = &
\left(\eta_{\sigma(j)}\right)_{j\geq1}
\end{eqnarray}
(in order to simplify the notation,
$\mathfrak{S}_{\N}$ will mean $\mathfrak{S}_{\N\setminus\{0\}}$
since all the considered sequences in the paper start by $j=1$).

Now $\sigma\in\mathfrak{S}_{\N}$ being given, 
one could first think that $E_N(f;\sigma(\eta))$ and $E_N(f;\eta)$ 
are essentially the {\em same}. Indeed, if $M_N:=\max\left\{N,\sigma(1),\ldots,\sigma(N)\right\}$,
then $E_{M_N}(f;\eta)$ and $E_{M_N}(f;\sigma(\eta))$
both interpolate $f$ on the $M_N$ first lines
$\eta_1,\ldots,\eta_N$ and 
$\eta_{\sigma(1)},\ldots,\eta_{\sigma(N)}$. Nevertheless,  if we change
the order of the sequence of the square from
Proposition~\ref{propctrex} above, the associated interpolation
formula may not converge anymore.
This leads to the following
question:
a given sequence $\left(\eta_j\right)_{j\geq1}$
whose associated interpolation formula 
$E_N(\cdot;\sigma(\eta))$ always
converges under the action of any permutation $\sigma$,
should be locally interpolable by real-analytical
curves? We will see that
the answer is affirmative as claimed by Theorem~\ref{fusion} below.

In order to deal with this problem, we need to consider the
following question: the sequence $\eta=\left(\eta_j\right)_{j\geq1}$
being fixed, 
if the formula $E_N(\cdot;\eta)$ converges,
what about
$E_N(\cdot;\eta')$, where 
$\eta':=\left(\eta_{j_k}\right)_{k\geq1}$ is any given
(infinite) subsequence of $\eta$?
On a first sight, the answer looks positive because of
the following intuitive argument: if $E_N(f;\eta)$ can interpolate
$f$ on {\em more} lines than $E_{N'}(f;\eta')$ does
(where $N'$ is the number of $k\geq1$ such that $j_k\leq N$) and
$E_N(f;\eta)$ converges, then why should not
$E_N(f;\eta')$ too? The true answer is that this heuristic argument is false.
Indeed, 
%as well as for the stability of the convergence
%of $E_N(\cdot;\eta)$ under the action of any permutation
%$\sigma\in\mathfrak{S}_{\N}$,
 it is also a strong condition that is equivalent
to the geometric criterion~(\ref{geom}).
This claim and the above one are specified by the following result, that
is the main theorem of this paper.

\begin{theorem}\label{fusion}

Let $\eta=\left(\eta_j\right)_{j\geq1}$ be any sequence
defined as in~(\ref{deflines}). The following conditions are equivalent:

\begin{enumerate}

\item\label{fusiongeom}

the set 
$\left\{\eta_j\right\}_{j\geq1}$ is locally interpolable by real-analytic curves;

\item\label{fusionpermut}

for all $f\in\mathcal{O}\left(\C^2\right)$
and all $\sigma\in\mathfrak{S}_{\N}$, 
$E_N(f;\sigma(\eta))$ converges to $f$
uniformly on any compact subset $K\subset\C^2$;

\item\label{fusionsubseq}

for all $f\in\mathcal{O}\left(\C^2\right)$ and all subsequence
$\eta'=\left(\eta_{j_k}\right)_{k\geq1}$,
$E_N\left(f;\eta'\right)$
converges to $f$ uniformly on any compact subset $K\subset\C^2$.

\end{enumerate}

\end{theorem}

First, this result finally gives an equivalence, for a given sequence $\left(\eta_j\right)_{j\geq1}$,
between the strong geometric
hypothesis~(\ref{geom}) and sharper conditions 
in terms of the validity of the
convergence of the associated interpolation formula $E_N(\cdot;\eta)$.
In particular, it clarifies in which sense this geometric condition
is sufficient.

Next, this results only deals with the convergence of $E_N(f;\eta)$ for
any $f\in\mathcal{O}\left(\C^2\right)$ and we would like to know what happens
if we consider the same assertion with any $f\in\mathcal{O}\left(B_2\left(0,r_0\right)\right)$
for any fixed $r_0>0$. One of the applications of Theorem~\ref{fusion} is its generalization
to the case of every complex ball $B_2(0,r_0)$, as specified by the following result.

\begin{corollary}\label{corollaireducorollaire}

Let $\eta=\left(\eta_j\right)_{j\geq1}$ be any sequence defined as in~(\ref{deflines}) and let
be $r_0>0$. The following conditions are equivalent:

\begin{enumerate}

\item\label{corolgeom}

the set 
$\left\{\eta_j\right\}_{j\geq1}$ is locally interpolable by real-analytic curves;

\item\label{corolpermut}

there is $\varepsilon_{\eta}>0$ such that 
for all $f\in\mathcal{O}\left(B_2(0,r_0)\right)$
and all $\sigma\in\mathfrak{S}_{\N}$, 
$E_N(f;\sigma(\eta))$ converges to $f$
uniformly on any compact subset
$K\subset B_2\left(0,\varepsilon_{\eta}r_0\right)$;

\item\label{corolsubseq}

there is $\varepsilon_{\eta}>0$ such that 
for all $f\in\mathcal{O}\left(B_2(0,r_0)\right)$
and all subsequence
$\eta'=\left(\eta_{j_k}\right)_{k\geq1}$,
$E_N\left(f;\eta'\right)$
converges to $f$ uniformly on any compact subset 
$K\subset B_2\left(0,\varepsilon_{\eta}r_0\right)$.

\end{enumerate}

\end{corollary}

As it can be noticed, these results are equivalences between a geometric condition
and the validity
of the convergence of its associated interpolation formula $E_N(\cdot;\eta)$
(i.e. in terms of functional approximation theory).

Moreover, we have another consequence that gives some precision
on the speed of convergence of $E_N(f;\eta)$ to $f$.

\begin{corollary}\label{velocitat}

When any of the equivalent conditions from Theorem~\ref{fusion}
or Corollary~\ref{corollaireducorollaire} is fulfilled, one has in addition the following estimate:
for all $\mathcal{K}\subset\mathcal{O}\left(\C^2\right)$
(resp. $\mathcal{K}\subset\mathcal{O}\left(B_2(0,r_0)\right)$)
and $K\subset\C^2$ (resp. $K\subset B_2(0,\varepsilon_{\eta}r_0)$) 
compact subsets, there are
$C_{\mathcal{K},K}$ and $\varepsilon_K>0$ such that for all
$\sigma\in\mathfrak{S}_{\N}$,
for all $\eta'=\left(\eta_{j_k}\right)_{k\geq1}$
and all $N\geq1$,
\begin{eqnarray*}
\sup_{f\in\mathcal{K}}
\;
\sup_{z\in K}
\left|
f(z)-E_N(f;\sigma(\eta))(z)
\right|
& \leq &
C_{\mathcal{K},K}\,(1-\varepsilon_K)^N
\end{eqnarray*}
and
\begin{eqnarray*}
\sup_{f\in\mathcal{K}}
\;
\sup_{z\in K}
\left|
f(z)-E_N\left(f;\eta'\right)(z)
\right|
& \leq &
C_{\mathcal{K},K}\,(1-\varepsilon_K)^N
\,.
\end{eqnarray*}

\end{corollary}

In particular, as it has been pointed out in~\cite{amadeo4}, a simple convergence
of $E_N(\cdot;\eta)$
(i.e. convergence of $E_N(f;\eta)$
for every fixed holomorphic function $f$) implies a uniform one. This can be interpreted as a
Banach-Steinhaus property for the family of operators
$\left\{E_N\left(\cdot;\eta\right)\right\}_{N\geq1}$ in the canonical
topology for the holomorphic functions (i.e. the topology of uniform convergence
on any compact subset).

Finally, the essential argument for the proof of Theorem~\ref{fusion} 
follows from the following result, whose proof is given in Section~\ref{subsequaux}.

\begin{proposition}\label{norealint}

Let $\left(\eta_j\right)_{j\geq1}$ be any sequence such that
the set $\left\{\eta_j\right\}_{j\geq1}$
is not locally interpolable by real-analytic curves.
Then there exists a subsequence
$\left(\eta_{j_k}\right)_{k\geq1}$ of
$\left(\eta_j\right)_{j\geq1}$ that satisfies
the following conditions:

\begin{itemize}

\item

the sequence $\left(\eta_{j_k}\right)_{k\geq1}$
is convergent in $\C{\bf P}^1$;

\item

the set $\left\{\eta_{j_k}\right\}_{k\geq1}$
is not locally interpolable by real-analytic curves.

\end{itemize}

\end{proposition}

We know that if $\left\{\eta_j\right\}_{j\geq1}$ (coming from any
sequence $\left(\eta_j\right)_{j\geq1}$)
is locally interpolable by real-analytic curves,
then so will be any of its subsets (finite or infinite),
in particular if it comes from a convergent subsequence 
$\left(\eta_{j_k}\right)_{k\geq1}$.
Conversely, we may ask what happens if $\left\{\eta_j\right\}_{j\geq1}$ is not
locally interpolable by real-analytic curves.
Proposition~\ref{norealint} gives an affirmative answer, in the sense that
one can extract convergent subsequences that are still not
locally interpolable by real-analytic curves.

\bigskip

I would like to thank G. Henkin for having introduced me this
interesting problem and J. Ortega-Cerd\`a for all his
guidance, rewarding ideas and discussions to progress in.

\bigskip

\section{On the non-equivalence of the geometric criterion}\label{ctrex}

In this section we deal with the proof of Proposition~\ref{propctrex}. We first need some
reminders and preliminary results.

\subsection{Some reminders}

First, the following result is given as Lemma~20
from~\cite{amadeo4} and is a necessary condition for a set
to satisfy the geometric condition~(\ref{geom}).

\begin{lemma}\label{interdense}

The topological closure of a set that is
locally interpolable by real-analytic curves, has empty interior.

\end{lemma}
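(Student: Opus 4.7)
The plan is to argue by contradiction, exploiting the equivalent reformulation~(\ref{geom}) of the definition and the fact that $z \mapsto \overline{z}$ is nowhere holomorphic.

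Assume toward a contradiction that $\overline{\{\eta_j\}_{j\geq1}}$ has nonempty interior, so that there exists a point $\zeta_0$ lying in the interior of $\overline{\{\eta_j\}_{j\geq1}}$. Since $\zeta_0 \in \overline{\{\eta_j\}_{j\geq1}}$, Definition~\ref{intercurve} (in its equivalent form~(\ref{geom})) supplies a neighborhood $V$ of $\zeta_0$ and a function $g \in \mathcal{O}(V)$ such that $\overline{\eta_j} = g(\eta_j)$ for every $\eta_j \in V$. Shrinking $V$ if necessary, I may further assume $V \subset \overline{\{\eta_j\}_{j\geq1}}$, which is possible precisely because $\zeta_0$ is an interior point.

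The next step is to propagate the identity $\overline{w} = g(w)$ from the sequence $\{\eta_j\}$ to the whole open set $V$. Since $V$ is open and contained in the closure of the sequence, the points of $V \cap \{\eta_j\}_{j\geq1}$ are dense in $V$. The continuous function $w \mapsto g(w) - \overline{w}$ vanishes on this dense subset, hence vanishes identically on $V$. Therefore $g(w) = \overline{w}$ throughout the open set $V$.

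This is the desired contradiction: the right-hand side $w \mapsto \overline{w}$ is not holomorphic at any point (its Wirtinger derivative $\partial/\partial \overline{w}$ equals $1$, not $0$), so it cannot coincide with the holomorphic function $g$ on an open set. The main (and really only) subtlety is making sure the reformulation~(\ref{geom}) is available in full—that the curve $\mathcal{C}$ of Definition~\ref{intercurve} is genuinely a graph $\overline{z} = g(z)$ over some holomorphic $g$ in a neighborhood of each accumulation point—but this is precisely the equivalence stated just after the definition, so no additional work is required here.
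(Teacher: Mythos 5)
Your proof is correct, and it takes a genuinely different (and shorter) route than the paper. The paper also reduces to a neighborhood $V_{\zeta_0}\subset D(\zeta_0,\varepsilon_0)\subset\overline{\{\eta_j\}_{j\geq1}}$ and a holomorphic $g$ interpolating the conjugate on $\{\eta_j\}\cap V_{\zeta_0}$, but instead of propagating the identity it works pointwise at $\zeta_0$: it evaluates the difference quotients $(\overline{\eta_{j_k}}-\overline{\zeta_0})/(\eta_{j_k}-\zeta_0)$ along a sequence $\eta_{j_k}\to\zeta_0$ to conclude $|g'(\zeta_0)|=1$, writes $g'(\zeta_0)=e^{i\theta}$, and then uses the full disc in the closure to pick approximating points $\eta_{j_p}$ near $\zeta_0+\tfrac{1}{p}ie^{-i\theta/2}$, along which the same difference quotient tends to $-e^{i\theta}$ — a contradiction showing the derivative cannot exist. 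Your argument replaces all of this with the observation that $V\cap\{\eta_j\}_{j\geq1}$ is dense in $V$ (because $V\subset\overline{\{\eta_j\}_{j\geq1}}$ and $V$ is open), so the continuous function $w\mapsto g(w)-\overline{w}$ vanishes identically on $V$, which is impossible since $\partial\overline{w}/\partial\overline{w}=1\neq0$. Both proofs exploit the same input (a full disc inside the closure plus the holomorphic interpolant $g$); yours buys simplicity and avoids the directional computation, while the paper's version has the minor virtue of isolating exactly what fails at a single point (non-existence of the complex derivative of conjugation), which is the same obstruction in disguise. One small point worth making explicit in your write-up: the fact that the closure of $\{\eta_j\}\cap V$ contains $V$ follows because for any $w\in V$ and any ball $B(w,r)\subset V$, the membership $w\in\overline{\{\eta_j\}}$ forces $B(w,r)\cap\{\eta_j\}\neq\emptyset$ — but this is routine and does not affect correctness.
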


Next, we remind Theorem~1 as one of the essential results from~\cite{amadeo4}
and that gives an 
equivalent criterion for a bounded sequence $\left(\eta_j\right)_{j\geq1}$
to make converge its associated interpolation formula $E_N(\cdot;\eta)$.

\begin{theorem}\label{theorem1}

Let $\left(\eta_j\right)_{j\geq1}$ be bounded and fix any $r_0>0$.
The following conditions are equivalent:

\begin{enumerate}

\item\label{thm11}

there is $\varepsilon_{\eta}>0$ such that,
for all $f\in\mathcal{O}\left(B_2(0,r_0)\right)$, the interpolation formula
$E_N(f;\eta)$ converges to $f$, uniformly on any
compact subset of $B_2(0,\varepsilon_{\eta}r_0)$;

\item\label{thm12}

for all $g\in\mathcal{O}\left(\C^2\right)$,
the interpolation formula $E_N(g;\eta)$ converges to $g$,
uniformly on any compact subset of $\C^2$;

\item\label{thm13}

$\exists\,R_{\eta}\geq1$, 
$\forall\,p,q\geq0$,
\begin{eqnarray}\label{criter}
\left|
\Delta_{p,(\eta_p,\ldots,\eta_1)}
\left[
\left(
\frac{\overline{\zeta}}{1+|\zeta|^2}
\right)^q
\right]
\left(\eta_{p+1}\right)
\right|
& \leq &
R_{\eta}^{p+q}
\,.
\end{eqnarray}

\end{enumerate}

\end{theorem}

The operator $\Delta_p$ is called {\em divided differences} and is defined as follows
(for any application $h$ that is defined at the $\eta_j$'s):
\begin{eqnarray}\label{divdiff}
& &
\begin{cases}
\Delta_0(h)(\eta_1)=h(\eta_1)\,;
\\
\\
\mbox{for all }\,p\geq1\,,\;
\Delta_{p,(\eta_p,\ldots,\eta_1)}
(h)(\eta_{p+1})
\;=
\\
\;\;\;\;\;\;\;\;\;\;\;\;\;\;\;\;\;\;\;\;\;\;\;\;
=\;
\dfrac{\Delta_{p-1,(\eta_{p-1},\ldots,\eta_1)}(h)(\eta_{p+1})
-\Delta_{p-1,(\eta_{p-1},\ldots,\eta_1)}(h)(\eta_{p})}
{\eta_{p+1}-\eta_p}
\,.
\end{cases}
\end{eqnarray}
$\Delta_p(h)$ can be seen as the discrete derivative of order
$p$ of the function $h$.
A lot its properties
can be found in the references, in particular the following one (see for example~\cite{devore},
Chapter~4, 7~(7.7)).

\begin{lemma}\label{Deltapaux}

Let $\{\eta_j\}_{j\geq1}$ be any set of different
points and $h$ any function defined on them. 
One has for all $p\geq0$,
\begin{eqnarray*}
\Delta_{p,(\eta_p,\ldots,\eta_1)}
[h]
\left(\eta_{p+1}\right)
& = &
\sum_{q=1}^{p+1}
\frac{h\left(\eta_q\right)}
{\prod_{j=1,j\neq q}^{p+1}
\left(\eta_q-\eta_j\right)}
\,.
\end{eqnarray*}

\end{lemma}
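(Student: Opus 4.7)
The claim is the classical Newton-form identity for the divided difference, so the natural approach is induction on $p \geq 0$. The base case $p=0$ is immediate from the definition $\Delta_0(h)(\theta_1)=h(\theta_1)$, since the right-hand side reduces to the single term $h(\theta_1)$ (the product in the denominator is empty).

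For the inductive step I would strengthen the induction hypothesis so that it provides the formula for any labelling of $p$ distinct points: that is, for every set of $p$ distinct points $\tau_1,\ldots,\tau_p$,
\begin{eqnarray*}
\Delta_{p-1,(\tau_{p-1},\ldots,\tau_1)}[h](\tau_p)
& = &
\sum_{q=1}^{p}
\frac{h\left(\tau_q\right)}
{\prod_{j=1,j\neq q}^{p}
\left(\tau_q-\tau_j\right)}\,.
\end{eqnarray*}
Applied to the two families $(\theta_1,\ldots,\theta_{p-1},\theta_{p+1})$ and $(\theta_1,\ldots,\theta_{p-1},\theta_p)$, this expresses the two terms appearing in the numerator of the recursion
\begin{eqnarray*}
\Delta_{p,(\theta_p,\ldots,\theta_1)}[h]\left(\theta_{p+1}\right)
& = &
\frac{\Delta_{p-1,(\theta_{p-1},\ldots,\theta_1)}[h]\left(\theta_{p+1}\right)-\Delta_{p-1,(\theta_{p-1},\ldots,\theta_1)}[h]\left(\theta_{p}\right)}{\theta_{p+1}-\theta_p}
\end{eqnarray*}
as explicit sums.

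Then I would collect the coefficient of each $h(\theta_q)$ in the result. For $q\leq p-1$, the coefficient factors as
\begin{eqnarray*}
\frac{1}{\prod_{j=1,j\neq q}^{p-1}(\theta_q-\theta_j)}\cdot\frac{1}{\theta_{p+1}-\theta_p}\left[\frac{1}{\theta_q-\theta_{p+1}}-\frac{1}{\theta_q-\theta_p}\right],
\end{eqnarray*}
and the bracket simplifies to $(\theta_{p+1}-\theta_p)/[(\theta_q-\theta_{p+1})(\theta_q-\theta_p)]$, cancelling the outer $1/(\theta_{p+1}-\theta_p)$ and producing exactly $1/\prod_{j=1,j\neq q}^{p+1}(\theta_q-\theta_j)$. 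For $q=p$ only the second term in the recursion contributes, giving $-1/[(\theta_{p+1}-\theta_p)\prod_{j=1}^{p-1}(\theta_p-\theta_j)]=1/\prod_{j=1,j\neq p}^{p+1}(\theta_p-\theta_j)$, and symmetrically the $q=p+1$ term comes out to $1/\prod_{j=1,j\neq p+1}^{p+1}(\theta_{p+1}-\theta_j)$. Summing these three contributions yields the claimed identity at level $p$.

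The only non-routine point is the mild strengthening of the induction hypothesis (so that the formula is available at the ``off-node'' point $\theta_{p+1}$, not only at $\theta_p$); once that observation is in place, the rest is bookkeeping: a telescoping of two rational functions with a common denominator. No hypothesis on the geometry or enumeration of $\{\theta_j\}$ beyond distinctness is needed, which is why the statement is presented here as a purely algebraic lemma.
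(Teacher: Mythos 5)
Your proposal is correct and follows essentially the same route as the paper: induction on $p$, applying the inductive formula at both evaluation points $\theta_{p+1}$ and $\theta_p$ (i.e.\ to the two node families differing in their last point), and then combining the coefficients of each $h(\theta_q)$ via the telescoping identity $\frac{1}{\theta_q-\theta_{p+1}}-\frac{1}{\theta_q-\theta_p}=\frac{\theta_{p+1}-\theta_p}{(\theta_q-\theta_{p+1})(\theta_q-\theta_p)}$. The only difference is presentational: you make explicit the strengthening of the induction hypothesis to arbitrary labellings of the nodes, which the paper uses implicitly when it evaluates $\Delta_{p,(\theta_p,\ldots,\theta_1)}[h]$ at the off-node point $\theta_{p+2}$.
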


We also deduce as an application the following result that will be useful for the proof of Theorem~\ref{fusion}.

\begin{corollary}\label{corollairedulemme}

For all $p,\,q\geq0$, 
\begin{eqnarray*}
\left|
\Delta_{p,(\eta_p,\ldots,\eta_1)}
\left[\zeta\mapsto\left(\frac{\overline{\zeta}}{1+|\zeta|^2}\right)^q\right]
\left(\eta_{p+1}\right)
\right|
& \leq &
\sum_{l=1}^{p+1}
\frac{1}
{\prod_{j=1,j\neq l}^{p+1}
\left(\eta_l-\eta_j\right)}
\,.
\end{eqnarray*}
In particular, the bound does not depend on $q\geq0$.

\end{corollary}

\begin{proof}

The proof immediately follows by Lemma~\ref{Deltapaux} with the particular choice of
$h(\zeta)=\left(\dfrac{\overline{\zeta}}{1+|\zeta|^2}\right)^q$
since for all $\zeta\in\C$, one has that
\begin{eqnarray*}
\left|
\left(\frac{\overline{\zeta}}{1+|\zeta|^2}\right)^q
\right|
& = &
\left(\frac{|\zeta|}{1+|\zeta|^2}\right)^q
\;=\;
\left(\sqrt{\frac{|\zeta|^2}{1+|\zeta|^2}}\right)^q
\times
\frac{1}{\left(\sqrt{1+|\zeta|^2}\right)^q}
\;\leq\;
1
\,.
\end{eqnarray*}

\end{proof}

\medskip

\subsection{Construction of a counterexample}

In this subsection, we construct the explicit sequence
$\left(\eta_j\right)_{j\geq1}\subset\mathcal{Q}$, where
$\mathcal{Q}$ is the closed square
\begin{eqnarray}
\mathcal{Q}
& = &
[0,1]+i[0,1]
\;=\;
\left\{z\in\C\,,\;
0\leq\Re(z),\,\Im(z)\leq1
\right\}
\,.
\end{eqnarray}
Its essential required property is that the $\eta_j$'s must be the {\em most} 
separated possible from each other.
We start by setting
$\eta_1=0,\,\eta_2=1,\,\eta_3=1+i,\,\eta_4=i$. We find the maximal
number of points of $\mathcal{Q}$ whose mutual distance is not
smaller than $1$. When it is not possible anymore,
we add the maximal number of points whose mutual distance
is at least $1/2$, then
$\eta_5=1/2,\,\eta_6=i/2,\,\eta_7=(1+i)/2,\,\eta_8=1+i/2,
\eta_9=1/2+i$.
More generally, we will choose by induction on $r\geq0$
the maximal number of points whose mutual distance
is at least $1/2^r$. 
%This number is asymptotically $2^{2r}$: this comes from the fact that the logarithm of
%the $\varepsilon$-entropy of $\mathcal{Q}$ (that is defined as
%the minimal number of balls of radius $\leq\varepsilon$
%to cover it) is of order $2\ln(1/\varepsilon)$
%(see for example~\cite{kolmogorov} and~\cite{vitushkin} 
%for the definition and essential properties of the $\varepsilon$-entropy).

Let fix $r\geq0$ and let 
$\mathcal{A}_r$ be an $1/2^r$-net of $\mathcal{Q}$, i.e.
a set of points
that are at least at a distance of $1/2^r$ from each other.
One can choose
\begin{eqnarray}\label{defAr}
\mathcal{A}_r 
& = &
\left\{
\frac{s+it}{2^r}\,,\,
(s,t)\in\N^2
\,,\,
0\leq s,t\leq2^r
\right\}
\,,
\end{eqnarray}
whose cardinal is $\left(1+2^r\right)^2$
(one can check that
$\mathcal{A}_0=\left\{0,1,i,1+i\right\}=\left\{\eta_1,\eta_2,\eta_3,\eta_4\right\}$).
In addition, one has
the sequence of inclusions
\begin{eqnarray}
\mathcal{A}_0
\;\subset\;
\mathcal{A}_1
\;\subset\;
\cdots
\;\subset\;
\mathcal{A}_r
\;\subset\;
\cdots
\,.
\end{eqnarray}
The sequence
$\eta=\left(\eta_j\right)_{j\geq1}$ will be defined
by induction on $r\geq0$ as follows: we first choose 
$\eta_1,\eta_2,\eta_3$ and $\eta_4$ for the first set
$\mathcal{A}_0$ (notice that we do not specify any enumeration
for these first $\eta_j$'s); next, if we assume having constructed
$\eta_1,\ldots,\eta_{N_r}$ with 
\begin{eqnarray}\label{defNr}
N_r
& = &
\left(1+2^r\right)^2
\,,
\end{eqnarray}
we define $\eta_{N_r+1},\ldots,\eta_{N_{r+1}}$ so that
\begin{eqnarray}\label{numetajctrex}
\left\{\eta_{N_r+1},\ldots,\eta_{N_{r+1}}\right\}
& = &
\mathcal{A}_{r+1}\setminus\mathcal{A}_r
\,.
\end{eqnarray}
Again, the enumeration for these $\eta_j$'s does not matter. The only important fact
is that $\eta_j\in\mathcal{A}_{r_j}$ for all $j\geq1$, where $r_j$ is the
first $r\geq0$ such that $j\leq N_{r}$.

The sequence $\left(\eta_j\right)_{j\geq1}$
can be defined by induction on $r\geq0$ and one has
\begin{eqnarray*}%\label{defetajctrex}
\left\{\eta_j\right\}_{j\geq1}
& = &
\mathcal{A}_{\infty}
\;:=\;
\bigcup_{r\geq0}
\mathcal{A}_r
\,.
\end{eqnarray*}
As it has been specified, the enumeration of $\left(\eta_j\right)_{j\geq1}$
does not matter as long as one has the following important condition:
for all $r\geq0$ and all $j,\,k\geq1$ such that $\eta_j\in\mathcal{A}_r$
and $\eta_k\in\mathcal{A}_{r+1}\setminus\mathcal{A}_r$,
then one necessarily has $j<k$.
Equivalently,
for all $r\geq0$,
the first $N_r$ points $\eta_j$'s
belong to $\mathcal{A}_r$.

We can deduce the following preliminar result.

\begin{lemma}\label{(etaj)dense}

The sequence $\left(\eta_j\right)_{j\geq1}$ is well-defined and is dense in $\mathcal{Q}$.
In addition, $\left(\eta_j\right)_{j\geq1}$ satisfies the following condition:
\begin{eqnarray}
\forall\;r\geq0\,,\;
\forall\;j\leq N_r=\left(1+2^r\right)^2,\;
\eta_j\in\mathcal{A}_r
\,.
\end{eqnarray}

\end{lemma}

\begin{proof}

The last assertion immediately follows from~(\ref{defNr}) and~(\ref{numetajctrex}).
In order to prove the density, let consider $z\in\mathcal{Q}$, $\varepsilon>0$ and
let be $r\geq0$ such that
$1/2^{r}\leq\varepsilon$. 
There is $\eta_{j_z}\in\mathcal{A}_{r+1}$ such that
$\left|\Re\left(\eta_{j_z}\right)-\Re(z)\right|\leq1/2^{r+1}$
and
$\left|\Im\left(\eta_{j_z}\right)-\Im(z)\right|\leq1/2^{r+1}$,
then
$\left|\eta_{j_z}-z\right|\leq\sqrt{2}/2^{r+1}<1/2^r\leq\varepsilon$.

\end{proof}

In order to prove Proposition~\ref{propctrex},
we first need to give an estimate of the divided differences 
$\left\{\Delta_{p}\right\}_{p\geq1}$ associated
with $\left(\eta_j\right)_{j\geq1}$.

\medskip

\subsection{A bound for the associated divided differences}

We start by the following preliminar result that is a lower bound for the products
that appear on the expression of the $\Delta_p$'s
given by Lemma~\ref{Deltapaux}.

\begin{lemma}\label{estimproduct}

Let consider the sequence $\left(\eta_j\right)_{j\geq1}$ from Lemma~\ref{(etaj)dense}.
There is $P_{\eta}\geq2$ 
such that, for all
$p\geq P_{\eta}$ and all
$q=1,\ldots,p+1$, one has
\begin{eqnarray*}
\prod_{j=1,j\neq q}^{p+1}
\left|
\eta_q-\eta_j
\right|
& \geq &
\exp(-9p)
\,.
\end{eqnarray*}

\end{lemma}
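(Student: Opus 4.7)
My plan is to exploit the key property built into the construction: by induction on $r$, the first $(2^r+1)^2$ terms of the sequence are exactly the points of the scaled grid $\mathcal{A}_r$. Given $p$, I would let $r = r(p)$ be the unique integer with $(2^{r-1}+1)^2 < p+1 \leq (2^r+1)^2$; then $\{\eta_1,\ldots,\eta_{p+1}\} \subset \mathcal{A}_r$, and the left inequality forces $2^{2r} < 4p$. Every difference $\eta_q - \eta_j$ with $j \neq q$ and $j \leq p+1$ has the form $(a+ib)/2^r$ for some $(a,b) \in \mathbb{Z}^2 \setminus \{(0,0)\}$, and in particular $|\eta_q - \eta_j| \geq 1/2^r$.

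For each integer $k \geq 1$, I would introduce the counting function
\begin{equation*}
A_k \;:=\; \#\bigl\{j \neq q,\ j \leq p+1 \,:\, |\eta_q - \eta_j| \in [k/2^r,\,(k+1)/2^r)\bigr\}.
\end{equation*}
Each such $\eta_j$ corresponds to a distinct point of $\mathbb{Z}^2$ lying in an annulus of radii $k$ and $k+1$ around the lattice representative of $\eta_q$, so by the Gauss-type count $A_k \leq 8k$ (easily verified by hand for small $k$, and from $A_k \leq \pi((k+1)^2 - k^2) + O(\sqrt{k}) \sim 2\pi k$ asymptotically; restricting to the square $[0,2^r]^2$ can only decrease $A_k$). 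Taking logarithms of the product and discarding the non-negative terms corresponding to $k \geq 2^r$ (where $\log(k/2^r) \geq 0$), I get
\begin{equation*}
\log \prod_{j\neq q,\, j\leq p+1} |\eta_q - \eta_j| \;\geq\; \sum_{k=1}^{2^r - 1} A_k \log(k/2^r) \;\geq\; 8 \sum_{k=1}^{2^r - 1} k \log(k/2^r),
\end{equation*}
where the last step reverses because $\log(k/2^r) < 0$ throughout the range. Comparing the final sum with $\int_0^{2^r} x \log(x/2^r)\,dx = -2^{2r}/4$ yields that it equals $-2^{2r}/4 + O(2^r r)$, so
\begin{equation*}
\log \prod_{j\neq q,\, j\leq p+1} |\eta_q - \eta_j| \;\geq\; -2 \cdot 2^{2r} + O(2^r r) \;\geq\; -8p + O(\sqrt{p}\log p) \;\geq\; -16p
\end{equation*}
for every $p$ beyond some threshold, which is what defines $p_\eta$.

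The main obstacle is the bookkeeping of constants. One has to verify the uniform bound $A_k \leq 8k$ for every $k \geq 1$, combining a small-$k$ case check (e.g.\ $A_1 \leq 8$, $A_2 \leq 16$) with the asymptotic $A_k \sim 2\pi k$, and one has to control the sum-to-integral comparison error $O(2^r r) = O(\sqrt{p}\log p)$. This error is absorbed comfortably by the slack between $-8p$ and the target $-16p$ once $p$ is large enough, so there is no conceptual difficulty—only careful quantitative tracking of the Gauss-circle and Euler--Maclaurin estimates.
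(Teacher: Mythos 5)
Your proposal is correct and follows essentially the same route as the paper: the same choice of $r$ with $(1+2^{r-1})^2 < p+1 \le (1+2^r)^2$, the same annulus count $\mathcal{N}(k)\le 8k$, the same comparison of $\sum k\ln(k/2^r)$ with $\int_0^1 t\ln t\,dt=-1/4$, and the same use of $2^{2r}\le 4p$ to convert $-2\cdot 2^{2r}$ into $-8p$ with the factor-of-two slack absorbing the error term. The only cosmetic difference is that you quantify the Riemann-sum error as $O(2^r r)$ where the paper leaves it as $2^{2r+3}\varepsilon(1/r)$.
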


\begin{proof}

Let fix any $p\geq2$ and let consider the unique
$r\geq0$ such that
\begin{eqnarray}\label{encadrep}
\left(1+2^{r-1}\right)^2
\;<\;p+1\;\leq\;
\left(1+2^r\right)^2
\,.
\end{eqnarray}
Now let fix $\eta_q$ with $q=1,\ldots,p+1$, i.e.
\begin{eqnarray*}%\label{defetaq}
\eta_q
\;=\;
\dfrac{s_q+it_q}{2^r}
& \mbox{ where } &
0\;\leq\; s_q,\,t_q\;\leq\;2^r
\,.
\end{eqnarray*}
Similarly,
for all $\eta_j\neq\eta_q$ with $j=1,\ldots,p+1$,
one has $\eta_j=\dfrac{s_j+it_j}{2^r}$ with
$0\leq s_j,\,t_j\leq2^r$
and $\left(s_j,t_j\right)\neq\left(s_q,t_q\right)$.
Since $\left|s_j-s_q\right|\leq2^r$
(resp. $\left|t_j-t_q\right|\leq2^r$),
then
\begin{eqnarray}\label{defkqj}
k_{q,j}
\;:=\;
\max\left\{
\left|s_j-s_q\right|
,
\left|t_j-t_q\right|
\right\}
& \in &
\left\{1,\ldots,2^r\right\}
\,.
\end{eqnarray}
It follows that
$\eta_j\in\mathcal{D}_{k_{q,j}}\left(\eta_q\right)$, where
$\mathcal{D}_k\left(\eta_q\right)$ is defined for all
$k\in\N$ by
\begin{eqnarray*}
\mathcal{D}_k\left(\eta_q\right)
& := &
\left\{
z\,=\,\frac{s+it}{2^r}\,,\;
(s,t)\in\Z^2\,,\;
\max\left\{
\left|s-s_q\right|
,
\left|t-t_q\right|
\right\}
\,=\,k
\right\}
\,.
\end{eqnarray*}

We first want to estimate 
$\mbox{card}\left[\mathcal{D}_k\left(\eta_q\right)\cap\left\{\eta_j,\,1\leq j\leq p+1,\,j\neq q\right\}\right]$
for all $k=1,\ldots,2^r$. We start by noticing that
\begin{eqnarray}\label{DkxSk}
\mathcal{D}_k\left(\eta_q\right)
& = &
\mathcal{S}_k\left(\eta_q\right)\setminus\mathcal{S}_{k-1}\left(\eta_q\right)
\,,
\end{eqnarray}
where $\mathcal{S}_k\left(\eta_q\right)$ is defined for all $k\in\N$ by
\begin{eqnarray*}
\mathcal{S}_k\left(\eta_q\right)
& := &
\left\{
z\,=\,\frac{s+it}{2^r}\,,\;
(s,t)\in\Z^2\,,\;
\max\left\{
\left|s-s_q\right|
,
\left|t-t_q\right|
\right\}
\,\leq\,k
\right\}
\,.
\end{eqnarray*}
Since on the one hand, one has for all $k\geq0$, that
\begin{eqnarray*}
\mbox{card}\left[
\mathcal{S}_k\left(\eta_q\right)
\right]
& = &
\mbox{card}\left[
\mathcal{S}_k\left(0\right)\right]
\;=\;
\mbox{card}
\left\{
z\,=\,\frac{s+it}{2^r}\,,\;
(s,t)\in\Z^2\,,\;
-k\leq s,\,t\leq k
\right\}
\\
& = &
(2k+1)^2
\,,
\end{eqnarray*}
and on the other hand, 
$\mathcal{S}_{k-1}\left(\eta_q\right)\subset\mathcal{S}_k\left(\eta_q\right)$ for all $k\geq1$,
it follows by~(\ref{DkxSk}) that
\begin{eqnarray}\nonumber
\mbox{card}\left[\mathcal{D}_k\left(\eta_q\right)\cap\left\{\eta_j,\,1\leq j\leq p+1,\,j\neq q\right\}\right]
& \leq &
\mbox{card}\left[\mathcal{D}_k\left(\eta_q\right)\right]
\\\nonumber
& = &
\mbox{card}\left[\mathcal{S}_k\left(\eta_q\right)\right]
-
\mbox{card}\left[\mathcal{S}_{k-1}\left(\eta_q\right)\right]
\\\label{estimcardDk}
& = &
(2k+1)^2-(2k-1)^2
\;=\;
8k
\,.
\end{eqnarray}

Next, one has for all $k=1,\ldots,2^r$, and all
$\eta_j\in\mathcal{D}_k\left(\eta_q\right)$ (with
$\eta_j=\dfrac{s_j+it_j}{2^r}$~), 
\begin{eqnarray}\label{estimetaj-etaq}
\left|
\eta_j-\eta_q
\right|
& \geq &
\max\left\{
\frac{\left|s_j-s_q\right|}{2^r}
,
\frac{\left|t_j-t_q\right|}{2^r}
\right\}
\;=\;
\frac{k}{2^r}
\,.
\end{eqnarray}

Finally, the estimates~(\ref{estimetaj-etaq}) and~(\ref{estimcardDk}) together yield
for all $k=1,\ldots,2^r$,
\begin{eqnarray*}
\prod_{\mathcal{D}_k\left(\eta_q\right)\cap\left\{\eta_j,\,1\leq j\leq p+1,\,j\neq q\right\}}
\left|\eta_q-\eta_j\right|
\;\geq\;
\left(
\frac{k}{2^r}
\right)^{\mbox{card}\left[\mathcal{D}_k\left(\eta_q\right)\cap\left\{\eta_j,\,1\leq j\leq p+1,\,j\neq q\right\}\right]}
\;\geq\;
\left(\frac{k}{2^r}\right)^{8k}
\,,
\end{eqnarray*}
the second inequality being valid since $0<k/2^r\leq1$.
By applying the following partition (justified by~(\ref{encadrep}) and
Lemma~\ref{(etaj)dense}),
\begin{eqnarray*}
\left\{
\eta_j\,,\;
j=1,\ldots,p+1\,,\;
j\neq q
\right\}
& = &
\bigcup_{k=1}^{2^r}
\left[
\mathcal{D}_k\left(\eta_q\right)\cap\left\{\eta_j,\,1\leq j\leq p+1,\,j\neq q\right\}
\right]
\end{eqnarray*}
(one indeed has $1\leq k\leq2^r$ by~(\ref{defkqj})),
we can deduce that
\begin{eqnarray}\nonumber
\prod_{j=1,j\neq q}^{p+1}
\left|
\eta_q-\eta_j
\right|
& = &
\prod_{k=1}^{2^r}
\left[
\prod_{\mathcal{D}_k\left(\eta_q\right)\cap\left\{\eta_j,\,1\leq j\leq p+1,\,j\neq q\right\}}
\left|\eta_q-\eta_j\right|
\right]
\;\geq\;
\prod_{k=1}^{2^r}
\left(\frac{k}{2^r}\right)^{8k}
\\\label{estimprodetaq}
& = &
\exp
\left[
\sum_{k=1}^{2^r}
8k\ln\left(k/2^r\right)
\right]
\;=\;
\exp
\left[
2^{2r+3}
\times
\frac{1}{2^r}
\sum_{k=1}^{2^r}
\frac{k}{2^r}
\ln\left(\frac{k}{2^r}\right)
\right]
\,.
\end{eqnarray}
The last expression involves the Riemann's sum of the continuous
function\\
$t\in\,]0,1]\mapsto t\ln t$, $0\mapsto0$, whose integral is
$\int_0^1t\ln tdt 
=\left[\dfrac{t^2}{2}\ln t\right]_0^1
-\int_0^1\dfrac{t}{2}dt
=-\dfrac{1}{4}$.
Then
\begin{eqnarray}\label{riemannsum}
& &
\frac{2^{2r+3}}{2^r}
\sum_{k=1}^{2^r}
\frac{k}{2^r}
\ln\left(\frac{k}{2^r}\right)
\;=\;
2^{2r+3}\left(-1/4+\varepsilon(1/r)\right)
\;=\;
\frac{2^{2r}}{4}
\left(-8+\varepsilon(1/r)\right)
\,,
\end{eqnarray}
where $\varepsilon(1/r)\rightarrow0$ as $1/r\rightarrow0$.
On the other hand, one has by~(\ref{encadrep}) that
\begin{eqnarray}\label{minorpxr}
p 
& \geq &
\left(1+2^{r-1}\right)^2-1
\;=\;
2^{2r-2}+2^r
\;\geq\;
\frac{2^{2r}}{4}
\,.
\end{eqnarray}
In addition, (\ref{encadrep}) also gives that 
$\varepsilon(1/r)=\varepsilon(1/p)$.
It follows by applying~(\ref{estimprodetaq}), (\ref{riemannsum}) and~(\ref{minorpxr})
that
\begin{eqnarray*}
\prod_{j=1,j\neq q}^{p+1}
\left|
\eta_q-\eta_j
\right|
& \geq &
\exp
\left[
p\times\left(-8+\varepsilon(1/p)\right)
\right]
\;\geq\;
\exp\left(-9p\right)
\,,
\end{eqnarray*}
for all $p\geq P_{\eta}$ ($\geq2$) so that
$\left|\varepsilon(1/p)\right|\leq1$
(notice that $P_{\eta}$ does not depend on $q=1,\ldots,p+1$).
The estimate being true for all $q=1,\ldots,p+1$,
the proof of the lemma is achieved.

\end{proof}

This allows us to prove the following result that will be useful for
the proof of Proposition~\ref{propctrex}.

\begin{lemma}\label{deltagen}

Let $h$ be any function defined on the set
$\{\eta_j\}_{j\geq1}$ (coming from the sequence
$\left(\eta_j\right)_{j\geq1}$ of Lemma~\ref{(etaj)dense}) and that is bounded:
\begin{eqnarray*}
\|h\|_{\infty}\;:=\;
\sup_{j\geq1}
\left|h\left(\eta_j\right)\right|
& < &
+\infty\,.
\end{eqnarray*}
Then there is $R_{\eta}\geq1$ 
such that for all $p\geq0$,
\begin{eqnarray*}
\left|
\Delta_{p,(\eta_p,\ldots,\eta_1)}
[h]\left(\eta_{p+1}\right)
\right|
& \leq &
\|h\|_{\infty}R_{\eta}^p
\,.
\end{eqnarray*}

\end{lemma}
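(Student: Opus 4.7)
The plan is to combine the two previous lemmas directly. By Lemma~\ref{Deltapaux} applied to the set $\{\eta_j\}_{j\geq1}$, one has the explicit representation
\begin{eqnarray*}
\Delta_{p,(\eta_p,\ldots,\eta_1)}[h](\eta_{p+1})
& = &
\sum_{q=1}^{p+1}
\frac{h(\eta_q)}
{\prod_{j=1,j\neq q}^{p+1}(\eta_q-\eta_j)}
\,.
\end{eqnarray*}
Applying the triangle inequality and using the boundedness of $h$ gives
\begin{eqnarray*}
\left|\Delta_{p,(\eta_p,\ldots,\eta_1)}[h](\eta_{p+1})\right|
& \leq &
\|h\|_{\infty}
\sum_{q=1}^{p+1}
\frac{1}
{\prod_{j=1,j\neq q}^{p+1}|\eta_q-\eta_j|}
\,.
\end{eqnarray*}

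Next I would invoke Lemma~\ref{estimproduct}: for all $p\geq p_{\eta}$ and all $q=1,\ldots,p+1$, the denominator is bounded below by $\exp(-16p)$. Consequently,
\begin{eqnarray*}
\left|\Delta_{p,(\eta_p,\ldots,\eta_1)}[h](\eta_{p+1})\right|
& \leq &
\|h\|_{\infty}\,(p+1)\exp(16p)
\,.
\end{eqnarray*}
Since $(p+1)\exp(16p)\leq\exp(17p)$ for $p$ large enough, one can absorb the polynomial factor by slightly increasing the constant in the exponent, so that this quantity is bounded by $\|h\|_{\infty}\exp(17p)$ for all $p\geq p_{\eta}$ (after possibly enlarging $p_\eta$).

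It remains to deal with the finitely many indices $p=0,1,\ldots,p_{\eta}-1$. For each such $p$, the quantity
\begin{eqnarray*}
C_p & := &
\sum_{q=1}^{p+1}
\frac{1}
{\prod_{j=1,j\neq q}^{p+1}|\eta_q-\eta_j|}
\end{eqnarray*}
is a finite positive number (the $\eta_j$'s being pairwise distinct). Choosing
\begin{eqnarray*}
R_{\eta}
& := &
\max\left(e^{17},\,\max_{0\leq p\leq p_\eta-1}C_p^{1/\max(p,1)}\right)
\end{eqnarray*}
(or any upper bound of this type) then ensures $\left|\Delta_p[h](\eta_{p+1})\right|\leq\|h\|_{\infty}R_{\eta}^p$ for every $p\geq 0$.

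The only genuine work is the geometric estimate, which has already been carried out in Lemma~\ref{estimproduct}; the present lemma is then purely a bookkeeping consequence of the combinatorial identity of Lemma~\ref{Deltapaux}. There is no real obstacle; one just has to be careful to treat the initial indices $p<p_\eta$ by brute force rather than via the asymptotic bound, and to choose $R_\eta$ large enough to dominate the polynomial factor $p+1$.
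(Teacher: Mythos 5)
Your proof is correct and follows essentially the same route as the paper: the representation from Lemma~\ref{Deltapaux} plus the triangle inequality, the lower bound $\exp(-16p)$ from Lemma~\ref{estimproduct} with the factor $p+1$ absorbed into $\exp(17p)$ for $p\geq p_\eta$, and a brute-force constant for the finitely many initial indices. The only (immaterial) difference is bookkeeping: the paper packages the small-$p$ constants into a single $C_\eta$ and sets $R_\eta=(1+C_\eta)e^{17}$, whereas you take a maximum over $p$-th roots.
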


\medskip

\begin{proof}

Let be $p\geq0$. One has by
Lemma~\ref{Deltapaux} that
\begin{eqnarray*}
\left|
\Delta_{p,(\eta_p,\ldots,\eta_1)}
[h]
\left(\eta_{p+1}\right)
\right|
\;\leq\;
\sum_{q=1}^{p+1}
\frac{\left|h\left(\eta_q\right)\right|}
{\prod_{j=1,j\neq q}^{p+1}
\left|\eta_q-\eta_j\right|}
\;\leq\;
\frac{
(p+1)
\|h\|_{\infty}}
{\min_{1\leq q\leq p+1}
\left(
\prod_{j=1,j\neq q}^{p+1}
|\eta_q-\eta_j|
\right)
}
\,.
\end{eqnarray*}

If $p\leq P_{\eta}-1$, then
\begin{eqnarray*}
\left|
\Delta_{p,(\eta_p,\ldots,\eta_1)}
[h]
\left(\eta_{p+1}\right)
\right|
& \leq &
C_{\eta}
\|h\|_{\infty}
\,,
\end{eqnarray*}
where
\begin{eqnarray*}
C_{\eta} & := &
\frac{P_{\eta}}
{\min_{1\leq p\leq P_{\eta}-1}
\left(
\min_{1\leq q\leq p+1}
\prod_{j=1,j\neq q}^{p+1}
|\eta_q-\eta_j|
\right)}
\,.
\end{eqnarray*}

Otherwise $p\geq P_{\eta}$ ($\geq2$) then one has 
by Lemma~\ref{estimproduct} that
\begin{eqnarray*}
\min_{1\leq q\leq p+1}
\left(
\prod_{j=1,j\neq q}^{p+1}
|\eta_q-\eta_j|
\right)
& \geq & 
1/\exp(9p)
\,,
\end{eqnarray*}
thus
\begin{eqnarray*}
\left|
\Delta_{p,(\eta_p,\ldots,\eta_1)}
[h]
\left(\eta_{p+1}\right)
\right|
\,\leq\,
(p+1)\|h\|_{\infty}
\exp(9p)
\,\leq\,
\exp(p)\|h\|_{\infty}
\exp(9p)
\,=\,
\|h\|_{\infty}
\exp(10p)
%\\
%& = &
%\|h\|_{\infty}
%\exp(10p)
%\,.
\end{eqnarray*}
(the second estimate being justified by the classical one:
$1+t\leq\exp(t)\,,\;\forall\,t\in\R$).

It follows that for all
$p\geq1$, one has
\begin{eqnarray*}
\left|
\Delta_{p,(\eta_p,\ldots,\eta_1)}
[h]
\left(\eta_{p+1}\right)
\right|
& \leq &
\|h\|_{\infty}\times
\max\left[C_{\eta}\,,\,\exp\left(10p\right)\right]
\\
& \leq &
\|h\|_{\infty}
\left(1+C_{\eta}\right)
\exp(10p)
\;\leq\;
\|h\|_{\infty}
R_{\eta}^p
\,,
\end{eqnarray*}
where
$R_{\eta}:=(1+C_{\eta})e^{10}$
(for $p=0$, one just has that
$\left|\Delta_0(h)\left(\eta_1\right)\right|
=\left|h\left(\eta_1\right)\right|\leq\|h\|_{\infty}\times R_{\eta}^0$)
and the proof is achieved.

\end{proof}

\begin{remark}

Notice that we do not need to assume any kind of regularity
for the function $h$, else that it is bounded on the set
$\{\eta_j\}_{j\geq1}$. 
%In particular, $h$ can be
%discontinuous and have a very bad behavior on
%$\mathcal{Q}$.
%$\overline{\{\eta_j\}_{j\geq1}}\setminus\{\eta_j\}_{j\geq1}$.

\end{remark}

\medskip

\subsection{Proof of Proposition~\ref{propctrex}}

Now we can give the proof of the proposition.

\begin{proof}

First, the set
$\left\{\eta_j\right\}_{j\geq1}$ is dense in the square $\mathcal{Q}$
by Lemma~\ref{(etaj)dense} then its topological closure
$\overline{\left\{\eta_j\right\}_{j\geq1}}$ has nonempty
interior. It follows by Lemma~\ref{interdense} that
$\left\{\eta_j\right\}_{j\geq1}$ cannot be locally interpolable by real-analytic curves.

Next, the sequence $\left(\eta_j\right)_{j\geq1}$ being
bounded, in order to prove
that $E_N(\cdot;\eta)$ converges
(i.e. for entire functions as well as for holomorphic functions
on any fixed ball $B_2\left(0,r_0\right)$), it suffices
to show that $\left(\eta_j\right)_{j\geq1}$ satisfies 
the estimate~(\ref{criter}) in Theorem~\ref{theorem1}.
For all $q\geq0$, one has with the choice of
$h(\zeta)=\left(\dfrac{\overline{\zeta}}{1+|\zeta|^2}\right)^q$ that
\begin{eqnarray*}
\left\|
\left(
\frac{\overline{\zeta}}{1+|\zeta|^2}
\right)^q
\right\|_{\infty}
& \leq &
\left\|
\left(\sqrt{\frac{\left|\overline{\zeta}\right|^2}{1+|\zeta|^2}}\right)^q
\right\|_{\infty}
\;\times\;
\left\|
\frac{1}{\left(\sqrt{1+|\zeta|^2}\right)^q}
\right\|_{\infty}
\\
& \leq &
\left\|
\frac{\sqrt{1+|\zeta|^2}}{1+|\zeta|^2}
\right\|_{\infty}^q
\;\times\;
1
\;\leq\;
1
%\,.
\end{eqnarray*}
(in particular $h$ is bounded on $\C$).
It follows by Lemma~\ref{deltagen} that for all
$p,\,q\geq0$, 
\begin{eqnarray*}
\left|
\Delta_{p,(\eta_p,\ldots,\eta_1)}
\left[
\left(
\frac{\overline{\zeta}}{1+|\zeta|^2}
\right)^q
\right]
\left(\eta_{p+1}\right)
\right|
& \leq &
\left\|
\left(
\frac{\overline{\zeta}}{1+|\zeta|^2}
\right)^q
\right\|_{\infty}
\times
R_{\eta}^p
\;\leq\;
R_{\eta}^p
\;\leq\;
R_{\eta}^{p+q}
\,,
\end{eqnarray*}
i.e. $\left(\eta_j\right)_{j\geq1}$ satisfies 
condition~(\ref{criter}) from Theorem~\ref{theorem1}
and this completes the proof of Proposition~\ref{propctrex}.

\end{proof}

\bigskip

\section{An essential result on the extraction of subsequences}\label{subsequaux}

In this part we give the proof of Proposition~\ref{norealint}
that will be useful in order to prove Theorem~\ref{fusion}. 
We first need a couple of preliminar results.

\subsection{Some reminders and preliminar results}

Let fix $\eta=\left(\eta_j\right)_{j\geq1}$.
We first remind the following identity that is justified by
Proposition~3
from~\cite{amadeo4} and that involves 
another analogous formula $R_N(\cdot;\eta)$,
that is the essential remainder part of $f-E_N(f;\eta)$:
$f\in\mathcal{O}\left(B_2(0,r_0)\right)$
(resp. $f\in\mathcal{O}\left(\C^2\right)$) being given,
one has
for all $N\geq1$ and all $z\in B_2(0,r_0)$ (resp. $z\in\C^2$),
\begin{eqnarray}\label{relapp}
f(z) & = &
E_N(f;\eta)(z)
-R_N(f;\eta)(z)
+\sum_{k+l\geq N}
a_{k,l}z_1^kz_2^l
\,;
\end{eqnarray}
here,
\begin{eqnarray*}
f(z)
& = &
\sum_{k,l\geq0}a_{k,l}z_1^kz_2^l
\end{eqnarray*}
is the Taylor expansion of $f$, and
\begin{eqnarray}\label{defRN}
& &
R_N(f;\eta)(z):=
\sum_{p=1}^N
\left(
\prod_{j=1,j\neq p}^N\frac{z_1-\eta_jz_2}{\eta_p-\eta_j}
\right)
\sum_{k+l\geq N}a_{k,l}\eta_p^k
\left(
\frac{z_2+\overline{\eta_p}z_1}{1+|\eta_p|^2}
\right)^{k+l-N+1}
\end{eqnarray}
is well-defined and belongs to
$\mathcal{O}\left(B_2(0,r_0)\right)$
(resp. $\mathcal{O}\left(\C^2\right)$).

Since the Taylor expansion of $f\in\mathcal{O}\left(B_2(0,r_0)\right)$
(resp. $f\in\mathcal{O}\left(\C^2\right)$) always converges to $0$
uniformly on any compact subset $K\subset B_2\left(0,r_0\right)$
(resp. $K\subset\C^2$), this gives an equivalence between the convergence
of $E_N(\cdot;\eta)$ and the one of $R_N(\cdot;\eta)$. More precisely,
we have the following result that is Lemma~7 from~\cite{amadeo4}.

\begin{lemma}\label{ENRN}

$r_0>0$ being fixed, let consider $f\in\mathcal{O}\left(B_2(0,r_0)\right)$
(resp. $f\in\mathcal{O}\left(\C^2\right)$) and $K$ any compact subset
of $B_2(0,r_0)$ (resp. $\C^2$). Then for all $N\geq1$, one has
\begin{eqnarray*}
\sup_{z\in K}
\left|
f(z)-E_N(f;\eta)(z)
\right|
& \leq &
\sup_{z\in K}
\left|
R_N(f;\eta)(z)
\right|
+
C_K(N+2)
\sup_{\|z\|\leq r_K}|f(z)|\;
(1-\varepsilon_K)^N
\,,
\end{eqnarray*}
where $\|z\|=\sqrt{|z_1|^2+|z_2|^2}$ is the usual norm on $\C^2$ and
$C_K$, $r_K$ depend only on $K$.

%\medskip

In particular, $E_N(f;\eta)$ converges to $f$
(uniformly on any compact subset) if and only if so does
$R_N(f;\eta)$ to $0$.

\end{lemma}

On the other hand, we will also deal with the action of some homographic transformations
on $\left(\eta_j\right)_{j\geq1}$. We remind
some notations and results from~\cite{amadeo4} (beginning of Section~4): let fix any
$\eta^c\notin\left\{\eta_j\right\}_{j\geq1}\cup\{\infty\}$ and let consider the unitary matrix
$U_{\eta^c}\in\mathcal{U}(2,\C)$ defined by
\begin{eqnarray*}
U_{\eta^c}
& := &
\frac{1}{\sqrt{1+|\eta^c|^2}}
\bordermatrix{ & & \cr
& \overline{\eta^c}  & 1  \cr
& 1  & -\eta^c  \cr}
\,;
\end{eqnarray*}
let also consider the following homographic application
\begin{eqnarray}\label{homog}
h_{\eta^c}\;:\;\C{\bf P}^1
& \rightarrow &
\C{\bf P}^1
\\\nonumber
\zeta & \mapsto &
\frac{1+\overline{\eta^c}\,\zeta}{\zeta-\eta^c}
%\,,
\end{eqnarray}
(where $\C{\bf P}^1=\C\cup\{\infty\}$) and the new sequence
\begin{eqnarray}\label{defthetaj}
\theta
\;=\;
\left(\theta_j\right)_{j\geq1}
& := &
\left(h_{\eta^c}\left(\eta_j\right)\right)_{j\geq1}
\,.
\end{eqnarray}
Then the set $\left\{\theta_j\right\}_{j\geq1}$ is well-defined as a subset of $\C$
and one has the following result that is Lemma~16 from~\cite{amadeo4}.

\begin{lemma}\label{lemmextensio}

Let be $f\in\mathcal{O}\left(B_2(0,r_0)\right)$
(resp. $f\in\mathcal{O}\left(\C^2\right)$).
For all $N\geq1$ and $z\in B_2(0,r_0)$ (resp. $z\in\C^2$),
\begin{eqnarray*}
R_N(f;\eta)(z) & = &
R_N\left(f\circ U_{\eta^c}^{-1};\theta\right)
\left(U_{\eta^c}z\right)
\,.
\end{eqnarray*}

\end{lemma}

These lemmas yield the following consequence.

\begin{corollary}\label{lemmextensioENRN}

$\eta=\left(\eta_j\right)_{j\geq1}$ being any sequence,
$\eta^c\notin\left\{\eta_j\right\}_{j\geq1}\cup\{\infty\}$ being fixed
and $h_{\eta^c}$ (resp. $\theta$) being defined by~(\ref{homog})
(resp.~(\ref{defthetaj})),
the formula $E_N(f;\eta)$ converges to $f$
uniformly on any compact subset $K\subset\C^2$ and for every
function $f\in\mathcal{O}\left(\C^2\right)$,
if and only if so does $E_N(f;\theta)$.

\end{corollary}

\begin{proof}

$f\in\mathcal{O}\left(\C^2\right)$ being given,
one has by Lemma~\ref{ENRN} that the formula
$E_N(f;\eta)$ converges to $f$ if and only if
$R_N(f;\eta)$ converges to $0$ (uniformly on any compact subset).
$U_{\eta^c}$ being an isometry, it follows by Lemma~\ref{lemmextensio} that
$R_N(f;\eta)$ converges to $0$
uniformly on any compact subset $K\subset\C^2$ and for every
function $f\in\mathcal{O}\left(\C^2\right)$,
if and only if so does $R_N\left(f\circ U_{\eta^c}^{-1};\theta\right)$,
thus if and only if so does $R_N(f;\theta)$ for
all $f\in\mathcal{O}\left(\C^2\right)$.
Finally, by applying Lemma~\ref{ENRN} again,
it is true if and only if
$E_N\left(f;\theta\right)$ converges to $f$
(uniformly on any compact subset) for all $f\in\mathcal{O}\left(\C^2\right)$.

\end{proof}

We also prove the following preliminar result about
the homographic transformations defined by~(\ref{homog}).

\begin{lemma}\label{hu-1}

For all $\eta^c\notin\left\{\eta_j\right\}_{j\geq1}\cup\{\infty\}$, one has 
$h_{\eta^c}^{-1}=h_{\overline{\eta^c}}$ where
\begin{eqnarray*}
h_{\overline{{\eta^c}}}\;:\;\C{\bf P}^1
& \rightarrow &
\C{\bf P}^1
\\\nonumber
\zeta & \mapsto &
\frac{1+{\eta^c}\,\zeta}{\zeta-\overline{\eta^c}}
\,.
\end{eqnarray*}
In addition, one also has that
$\overline{\eta^c}\notin\left\{h_{\eta^c}\left(\eta_j\right)\right\}_{j\geq1}\bigcup\{\infty\}$,
i.e. $h_{\eta^c}^{-1}=h_{\overline{\eta^c}}$  is of the same kind~(\ref{homog})
for the associated set 
$\left\{h_{\eta^c}\left(\eta_j\right)\right\}_{j\geq1}=\left\{\theta_j\right\}_{j\geq1}$.

\end{lemma}

\begin{proof}

Indeed, for all 
$\zeta\in\C\setminus\left\{\overline{\eta^c}\right\}$, one has that
\begin{eqnarray*}
\left(h_{\eta^c}\circ h_{\overline{\eta^c}}\right)(\zeta)
& = &
\frac{
1+\overline{\eta^c}\,
\dfrac{
1+{\eta^c}\,\zeta
}{
\zeta-\overline{\eta^c}
}
}{
\dfrac{
1+{\eta^c}\,\zeta
}{
\zeta-\overline{\eta^c}
}
-\eta^c
}
\;=\;
\frac{
\zeta+\eta^c\overline{\eta^c}\,\zeta
}{
1+\eta^c\overline{\eta^c}
}
\;=\;
\zeta
\,,
\end{eqnarray*}
then the equality holds for all $\zeta\in\C{\bf P}^1$.
The second assertion follows by~(\ref{homog}) since $h_{\eta^c}(\infty)=\overline{\eta^c}$,
then $h_{\eta^c}\left(\eta_j\right)\neq\overline{\eta^c}$ for all $j\geq1$.

\end{proof}

We finish the subsection with the following result
reminded as Lemma~18
from~\cite{amadeo4}, and  that gives an equivalent definition for
the geometric criterion~(\ref{geom}).

\begin{lemma}\label{curve}

The set
$\{\eta_j\}_{j\geq1}$ is locally interpolable
by real-analytic curves 
if and only if it can locally holomorphically interpolate 
the conjugate function, i.e. for all
$\zeta\in\overline{\{\eta_j\}_{j\geq1}}$
(the topological closure of $\{\eta_j\}_{j\geq1}$ in $\C{\bf P}^1$),
there are a neighborhood $V$ of $\zeta$
and $g\in\mathcal{O}\left(V\right)$ such that
\begin{eqnarray}\label{defequivanal}
\overline{\eta_j}
& = &
g(\eta_j)\,,\;
\forall\,\eta_j\in V
\,.
\end{eqnarray}

\end{lemma}

\medskip

\subsection{On the extraction of certain subsequences}\label{subsequnotrealanal}

Now we can give the proof of Proposition~\ref{norealint}.

\begin{proof}

Since the set $\left\{\eta_j\right\}_{j\geq1}$ is not locally interpolable by real-analytic curves,
it follows by Lemma~\ref{curve} that there is 
$\zeta_0\in\overline{\left\{\eta_j\right\}_{j\geq1}}$
without any neighborhood 
$V\in\mathcal{V}\left(\zeta_0\right)$
and holomorphic function
$g\in\mathcal{O}\left(V_{\zeta_0}\right)$
that can interpolate the conjugate function on
$\left\{\eta_j\right\}_{j\geq1}\bigcap V$, i.e.
\begin{eqnarray}\label{notanalint}
\forall\,V\in\mathcal{V}\left(\zeta_0\right),\,
\forall\,g\in\mathcal{O}(V),\,
\exists\,\eta_j\in V,\;
g\left(\eta_j\right)
\;\neq\;
\overline{\eta_j}
\,.
\end{eqnarray}
In particular, $\zeta_0$ cannot be isolated in
$\left\{\eta_j\right\}_{j\geq1}$.
Otherwise, if $\zeta_0\neq\infty$
(resp. $\zeta_0=\infty$), then by taking
$V_{\zeta_0}\subset\C$ such that
$\left\{\eta_j\right\}_{j\geq1}\bigcap V_{\zeta_0}=\left\{\zeta_0\right\}$
(resp. $V_{\infty}=\C{\bf P}^1\setminus K$, where the compact subset
$K$ is big enough so that
$\left\{\eta_j\right\}_{j\geq1}\setminus K\subset\{\infty\}$)
and 
\begin{eqnarray*}
g_{\zeta_0}
\;:\;
V_{\zeta_0}
& \rightarrow &
\C
\\
\zeta
& \mapsto &
g_{\zeta_0}(\zeta)\equiv\overline{\zeta_0}
\end{eqnarray*}
(resp.
\begin{eqnarray*}
g_{\infty}
\;:\;
V_{\infty}
& \rightarrow &
\C{\bf P}^1
\\
\zeta
& \mapsto &
g_{\infty}(\zeta)\equiv\infty
\;)
\,,
\end{eqnarray*}
we would get a contradiction with~(\ref{notanalint}).

As a consequence, there is a  subsequence
$\left(\eta_{j_k}\right)_{k\geq1}\subset\left(\eta_j\right)_{j\geq1}$ that satisfies:
\begin{eqnarray}\label{etajk}
\begin{cases}
\mbox{$\left(\eta_{j_k}\right)_{k\geq1}$ converges to $\zeta_0$;}
\\
\mbox{$\eta_{j_k}\neq\zeta_0$ for all $k\geq1$.}
\end{cases}
\end{eqnarray}
We will then deal with the cases $\zeta_0\in\C$ and $\zeta_0=\infty$
respectively.

\medskip

\subsubsection*{\underline{$\zeta_0\in\C$}}
We start by setting
$S_0
:=
\left(\eta_{j_k}\right)_{k\geq1}$
%We then consider
and% we set
\begin{eqnarray}\label{defS1}
S_1
& := &
S_0\bigcap D\left(\zeta_0,1\right)
\,,
\end{eqnarray}
where $D\left(\zeta_0,1\right)=\left\{\zeta\in\C,\,\left|\zeta-\zeta_0\right|<1\right\}$.
By construction, $S_1$ gives a (nonempty and infinite) sequence
that converges to $\zeta_0$.
If $S_1$ (as a set) is not locally interpolable by real-analytic curves, the proposition is proved.
Otherwise (because $\zeta_0$ is a limit point of $S_1$), there are
$V_1\in\mathcal{V}\left(\zeta_0\right)$
and
$g_1\in\mathcal{O}\left(V_1\right)$
such that
\begin{eqnarray*}
\overline{\eta_j}
& = &
g_1\left(\eta_j\right)
\mbox{ for all }\eta_j\in S_1\cap V_1
\,.
\end{eqnarray*}
By reducing $V_1$ if necessary, we can assume that
$V_1\subset D\left(\zeta_0,1\right)$ and 
$V_1$ is connected.
Since $\zeta_0$ satisfies~(\ref{notanalint}), it follows that
$g_1|_{V_1}$ cannot interpolate the conjugate function
on $V_1\bigcap\{\eta_j\}_{j\geq1}$, i.e.
there is $\eta_{s_1}\in V_1\bigcap\{\eta_j\}_{j\geq1}$
such that $g_1\left(\eta_{s_1}\right)\neq\overline{\eta_{s_1}}$.
We set 
\begin{eqnarray*}
S_2
& := &
S_1
\bigcup
\left\{\eta_{s_1}\right\}
\end{eqnarray*}
and
$S_2$ (with any enumeration) still gives a sequence that converges to $\zeta_0$.

Let fix $m\geq1$ and let assume having constructed
$\eta_{s_1},\ldots,\eta_{s_m}$,
$S_1,\ldots,S_m$,
$V_1,\ldots,V_m$ and $g_1,\ldots,g_m$
such that for all
$q=1,\ldots,m$, one has the following properties:
\begin{eqnarray}\label{inducnorealint1}
& &
V_q\in\mathcal{V}\left(\zeta_0\right)
\mbox{ and }
V_q\mbox{ is connected; }
\;\;\;\;\;\;\;\;\;\;\;\;\;\;\;\;\;\;\;\;\;\;\;\;\;\;\;\;\;\;\;\;
\;\;\;\;\;\;\;\;\;\;\;\;\;\;\;\;\;\;\;\;\;\;\;\;\;\;\;
\\\label{inducnorealint2}
& &
\eta_{s_q}\in V_q\subset D\left(\zeta_0,1/2^{q-1}\right)
\,;
\\\label{inducnorealint3}
& &
g_q\in\mathcal{O}\left(V_q\right)
\mbox{ and }
g_q\left(\eta_{s_q}\right)\neq\overline{\eta_{s_q}}
\,;
\\\label{inducnorealint4}
& &
g_q\left(\eta_j\right)=\overline{\eta_j}
\;\mbox{ for all }
\eta_j\in S_{q}\cap V_q
\,,
\end{eqnarray}
where
\begin{eqnarray}\label{defSm}
S_{q}
& = &
S_1\bigcup\left\{\eta_{s_1},\ldots,\eta_{s_{q-1}}\right\}
\mbox{ for all }
q=2,\ldots,m
\,.
\end{eqnarray}

We first consider the set
\begin{eqnarray*}
S_{m+1}
& := &
S_1\bigcup\left\{\eta_{s_1},\ldots,\eta_{s_m}\right\}
\end{eqnarray*}
and this satisfies~(\ref{defSm}) for all $q=2,\ldots,m+1$.
Next, $S_{m+1}$ (with any enumeration) will give a sequence that still converges to $\zeta_0$ as
the union of $S_1$ (that converges to $\zeta_0$ by~(\ref{defS1}) and~(\ref{etajk}))
and the finite set 
$\left\{\eta_{s_1},\ldots,\eta_{s_m}\right\}$.
If $S_{m+1}$ is not locally interpolable by real-analytic curves,
the proposition is proved.
Otherwise (because $\zeta_0$ is a limit point of $S_{m+1}$), 
there are $V_{m+1}\in\mathcal{V}\left(\zeta_0\right)$
and $g_{m+1}\in\mathcal{O}\left(V_{m+1}\right)$ such that
\begin{eqnarray*}
\overline{\eta_j}
& = &
g_{m+1}\left(\eta_j\right)
\mbox{ for all }
\eta_j\in S_{m+1}\cap V_{m+1}
\,.
\end{eqnarray*}
%
%(this satisfies~(\ref{inducnorealint4}) for $q=m+1$).
By reducing $V_{m+1}$ if necessary, we can assume that
$V_{m+1}\subset D\left(\zeta_0,1/2^m\right)$ and 
$V_{m+1}$ is connected.
%(this satisfies~(\ref{inducnorealint1}) for $q=m+1$).
On the other hand, since $\zeta_0$ satisfies~(\ref{notanalint}), 
it follows that there is 
$\eta_{s_{m+1}}\in V_{m+1}$ such that
$g_{m+1}\left(\eta_{s_{m+1}}\right)\neq\overline{\eta_{s_{m+1}}}$.
This proves~(\ref{inducnorealint1}), (\ref{inducnorealint2}), (\ref{inducnorealint3})
and~(\ref{inducnorealint4}) for $q=m+1$, and completes the induction.

Now if there is $m\geq1$ such that the set
$S_{m}$ defined by~(\ref{defSm}) is not locally interpolable by real-analytic curves,
then the proposition is proved (since any enumeration of $S_{m}$ will give a sequence that
converges to $\zeta_0$).

Otherwise, we can construct for all $m\geq1$, such 
$\eta_{s_m}$, $S_m$, $V_m$ and $g_m$ that 
fulfill~(\ref{inducnorealint1}), (\ref{inducnorealint2}), (\ref{inducnorealint3})
and~(\ref{inducnorealint4})
for all $q=1,\ldots,m$, and consider the following set
\begin{eqnarray*}
S_{\infty}
& := &
\bigcup_{m\geq1}S_{m}
\;=\;
S_1
\bigcup
\left\{
\eta_{s_m},\,m\geq1
\right\}
\,.
\end{eqnarray*}
Then any enumeration of $S_{\infty}$ will give a sequence that converges to $\zeta_0$
as the union of $S_1$ (that converges to $\zeta_0$ by~(\ref{defS1}) and~(\ref{etajk}))
and the convergent sequence
$\left(\eta_{s_m}\right)_{m\geq1}$ (since by~(\ref{inducnorealint2}),
one has $\eta_{s_m}\in D\left(\zeta_0,1/2^{m-1}\right)$). 
If we prove that $S_{\infty}$ is not locally interpolable by
real-analytic curves, the proof of the proposition will be achieved
in the case for which $\zeta_0\in\C$.

Let assume on the contrary that $S_{\infty}$ is, i.e.
(since $\zeta_0$ is a limit point of $S_{\infty}$)
there are 
$V_{\infty}\in\mathcal{V}\left(\zeta_0\right)$ and
$g_{\infty}\in\mathcal{O}\left(V_{\infty}\right)$ such that
$g_{\infty}\left(\eta_j\right)
=\overline{\eta_j}$ for all
$\eta_j\in S_{\infty}\bigcap V_{\infty}$.
In particular, this yields for all $m\geq1$ (since
$S_m\subset S_{\infty}$),
\begin{eqnarray}\label{ginfty}
g_{\infty}\left(\eta_j\right)
& = &
\overline{\eta_j}
\,,
\;\;\;\forall\;
\eta_j\in S_{m}\bigcap V_{\infty}
\,.
\end{eqnarray}
On the other hand, by~(\ref{inducnorealint2}) there is
$m_0\geq1$ such that
$V_{m_0}\subset D\left(\zeta_0,1/2^{m_0-1}\right)
\subset V_{\infty}$.
In addition, one has
by~(\ref{inducnorealint4}) for $q=m_0$, that
\begin{eqnarray*}
g_{m_0}\left(\eta_j\right)
& = &
\overline{\eta_j}
\;\mbox{ for all }
\eta_j\in S_{{m_0}}\cap V_{m_0}
\,.
\end{eqnarray*}
Hence
$g_{m_0}$ and $g_{\infty}|_{V_{m_0}}$ are both holomorphic functions
on the domain $V_{m_0}$, that coincide on the set
$S_{{m_0}}\cap V_{m_0}$.
Since by~(\ref{defSm}), $S_{{m_0}}\cap V_{m_0}\supset S_1\cap V_{m_0}$
that is infinite with limit point $\zeta_0\in V_{m_0}$
by~(\ref{defS1}), (\ref{etajk}) and~(\ref{inducnorealint1}),
it follows that 
\begin{eqnarray}\label{ginfty=gm0}
g_{\infty}|_{V_{m_0}}
& \equiv &
g_{m_0}
\,.
\end{eqnarray}
But an application of~(\ref{ginfty}) for $m=m_0+1$ yields
$g_{\infty}\left(\eta_j\right)=\overline{\eta_j}$
for all $\eta_j\in S_{m_0+1}\cap V_{\infty}$.
In particular, since
$\eta_{s_{m_0}}\in V_{m_0}\subset D\left(\zeta_0,1/2^{m_0-1}\right)\subset V_{\infty}$ 
 (by~(\ref{inducnorealint2}) for $q=m_0$)
and $\eta_{s_{m_0}}\in S_{m_0+1}$ by~(\ref{defSm}) for $m=m_0+1$,
one has $\eta_{s_{m_0}}\in S_{m_0+1}\cap V_{\infty}$ then
\begin{eqnarray}\label{ginftyinterpol}
g_{\infty}\left(\eta_{s_{m_0}}\right)
& = &
\overline{\eta_{s_{m_0}}}
\,.
\end{eqnarray}
Moreover, an application of~(\ref{inducnorealint3}) for $q=m_0$, also
yields
\begin{eqnarray}\label{gm0ninterpol}
g_{m_0}\left(\eta_{s_{m_0}}\right)
\neq
\overline{\eta_{s_{m_0}}}
\,.
\end{eqnarray}
Finally, (\ref{ginfty=gm0}), (\ref{ginftyinterpol}) and~(\ref{gm0ninterpol})
together lead to (since $\eta_{s_{m_0}}\in V_{m_0}$)
\begin{eqnarray*}
\overline{\eta_{s_{m_0}}}
\;=\;
g_{\infty}\left(\eta_{s_{m_0}}\right)
\;=\;
g_{\infty}|_{V_{m_0}}\left(\eta_{s_{m_0}}\right)
\;=\;
g_{m_0}\left(\eta_{s_{m_0}}\right)
\;\neq\;
\overline{\eta_{s_{m_0}}}
\,,
\end{eqnarray*}
and this is impossible. Necessarily, $S_{\infty}$ cannot be
locally interpolable by real-analytic curves and the proposition is proved
in the case for which $\zeta_0\in\C$.

\medskip

\subsubsection*{\underline{$\zeta_0=\infty$}}
First, by
removing $0$ from $\left\{\eta_j\right\}_{j\geq1}$ if necessary,
we can assume that $\eta_j\neq0,\,\forall\,j\geq1$
(as well as $\eta_{j_k}\neq0,\,\forall\,k\geq1$).
Indeed, since the sequence
$\left(\eta_{j_k}\right)_{k\geq1}$ converges to $\infty$
by~(\ref{etajk}),
it follows that the subset $\left\{\eta_{j_k}\right\}_{k\geq1}\setminus\{0\}$
is infinite, then so is the set
$\left\{\eta_j\right\}_{j\geq1}\setminus\{0\}
\supset\left\{\eta_{j_k}\right\}_{k\geq1}\setminus\{0\}$.
In addition, the new subset
$\left\{\eta_{j_k}\right\}_{k\geq1}\setminus\{0\}$ gives a new
sequence that still satisfies~(\ref{etajk}).

Now let consider the sequence
$\left(\theta_j\right)_{j\geq1}$ where
\begin{eqnarray*}
\theta_j
\;:=\;
\frac{1}{\eta_j}
\;\mbox{ for all }\;
j\geq1
\,.
\end{eqnarray*}
First, $\left(\theta_j\right)_{j\geq1}$ is well-defined.
Next, since $\left(\eta_{j_k}\right)_{k\geq1}$
satisfies~(\ref{etajk}) with $\zeta_0=\infty$, it follows that
so does the subsequence
$\left(\theta_{j_k}\right)_{k\geq1}$ with the choice of $\zeta_0':=0$, i.e.
\begin{eqnarray}\label{thetajk}
\begin{cases}
\mbox{$\left(\theta_{j_k}\right)_{k\geq1}$ converges to $0$\,;}
\\
\mbox{$\theta_{j_k}\neq0$ for all $k\geq1$.}
\end{cases}
\end{eqnarray}

Lastly, we claim that $\zeta_0'=0$ satisfies~(\ref{notanalint}) as well. Indeed, let be
$V\in\mathcal{V}(0)$ and $g\in\mathcal{O}\left(V\right)$. We want to prove that
there exists $\theta_j\in V$ such that
$g\left(\theta_j\right)\neq\overline{\theta_j}$.

If $g(0)\neq0$, then by~(\ref{thetajk}),
$\overline{\theta_{j_k}}\longrightarrow0$
and $g\left(\theta_{j_k}\right)\longrightarrow g(0)\neq0$
as $k\rightarrow+\infty$. It follows that
$g\left(\theta_{j_k}\right)\neq\overline{\theta_{j_k}}$ for all
$k$ large enough and the claim is proved in this case.

Otherwise, $g(0)=0$. 
%By reducing $V$ if necessary, one can assume that $0\notin V$. 
Let consider 
\begin{eqnarray*}
W 
&:= &
\left\{\frac{1}{\zeta}\,,\;\zeta\in V\setminus\{0\}\right\}
\bigcup\,
\{\infty\}
\end{eqnarray*}
and
\begin{eqnarray*}
h\;:\;W
& \longrightarrow &
\C{\bf P}^1
\\
\infty
& \longmapsto &
\infty
\,,
\\
\zeta\in\C\cap W 
& \longmapsto &
\begin{cases}
\dfrac{1}{g(1/\zeta)}
\;\mbox{ if }\;
g(1/\zeta)\neq0
\,,
\\
\infty
\;\mbox{ otherwise.}
\end{cases}
\end{eqnarray*}
Then $W\in\mathcal{V}(\infty)$, $h$ is well-defined
and $h\in\mathcal{O}(W)$. It follows by~(\ref{notanalint})
that there is $\eta_j\in W$ such that
$h\left(\eta_j\right)\neq\overline{\eta_j}$.
If $g(1/\eta_j)=0$, i.e.
$g\left(\theta_j\right)=0$, then
$\overline{\theta_j}=1/\overline{\eta_j}\neq0=g\left(\theta_j\right)$,
and this proves the claim in that case.
Otherwise, $g\left(1/\eta_j\right)\neq0$, i.e. $g\left(\theta_j\right)\neq0$ then
\begin{eqnarray*}
\frac{1}{g\left(\theta_j\right)}
\;=\;
\frac{1}{g\left(1/\eta_j\right)}
\;=\;
h\left(\eta_j\right)
& \neq &
\overline{\eta_j}
\;=\;
\frac{1}{\overline{\theta_j}}
\,,
\end{eqnarray*}
hence
$g\left(\theta_j\right)\neq\overline{\theta_j}$
and the claim is proved in this last case.

We can now apply the previous case of the proposition with the choice of 
$\left(\theta_j\right)_{j\geq1}$ and $\zeta_0'=0$ to get a subsequence
$\left(\theta_{j'_k}\right)_{k\geq1}$ (maybe different from
$\left(\theta_{j_k}\right)_{k\geq1}$) that converges to $0$ and
that is not locally interpolable by real-analytic curves.
It follows that the sequence
$\left(\eta_{j'_k}\right)_{k\geq1}=\left(1/\theta_{j'_k}\right)_{k\geq1}$
converges to $\infty$. On the other hand,
the inverse function $\zeta\mapsto1/\zeta$ being a homographic transformation,
it is in particular a biholomorphic application of $\C{\bf P}^1$. Hence the subset
$\left\{\eta_{j'_k}\right\}_{k\geq1}=\left\{1/\theta_{j'_k}\right\}_{k\geq1}$ 
cannot be either locally interpolable by
real-analytic curves. 
Finally, one also has that $\eta_{j'_k}\neq0$ for all $k\geq1$.
This proves the proposition in this second case
and completes its whole proof.

\end{proof}

\begin{remark}\label{etajkn0}

As we have seen in the above proof, we know in addition that 
in the case for which $\zeta_0=\infty$,
we also have that $\eta_{j_k}\neq0$ for all $k\geq1$.

\end{remark}

\bigskip

\section{Proof of the main theorem}

In the first subsection, we deal with the proof of the equivalence 
between~(\ref{fusiongeom}) and~(\ref{fusionsubseq}) in the statement 
of Theorem~\ref{fusion}.

\medskip

\subsection{On the stability by extraction of subsequences}

Before giving the proof of this part,
we remind the following result as Proposition~2 from~\cite{amadeo4},
and that is a special case of equivalence for
the geometric criterion~(\ref{geom}), i.e.
in the particular case when
$\left(\eta_j\right)_{j\geq1}$ is a convergent sequence, 
condition~(\ref{geom}) also becomes necessary.

\begin{proposition}\label{specialequiv}

Let $\left(\eta_j\right)_{j\geq1}$ be any convergent sequence (in $\C$).
If the interpolation formula
$E_N(f;\eta)$ converges to $f$ (uniformly on any compact subset)
for all $f\in\mathcal{O}\left(\C^2\right)$, then
$\{\eta_j\}_{j\geq1}$ is locally interpolable by real-analytic curves.

\end{proposition}

\begin{remark}\label{remark5.2}

To be rigorous, in order to apply Proposition~\ref{specialequiv},
we should also assume that $E_N(f;\eta)$ converges to $f$
for all $f\in\mathcal{O}\left(B_2(0,r_0)\right)$. But as specified by 
Remark~5.2 from~\cite{amadeo4}, it is sufficient to assume the convergence
of $E_N(f;\eta)$ for all $f\in\mathcal{O}\left(\C^2\right)$.

\end{remark}

\begin{proof}

First, if $\left\{\eta_j\right\}_{j\geq1}$ is
locally interpolable by real-analytic curves,
then so is any (infinite) subset
$\left\{\eta_{j_k}\right\}_{k\geq1}$.
The  implication~(\ref{fusiongeom})$\implies$(\ref{fusionsubseq}) then
follows by Theorem~\ref{theorem2}.

Conversely, let assume that
$\left\{\eta_j\right\}_{j\geq1}$ is not
locally interpolable by real-analytic curves.
By Proposition~\ref{norealint},
there is a subsequence
$\left(\eta_{j_k}\right)_{k\geq1}$ 
that is not locally interpolable by real-analytic curves
and that is convergent (in $\C{\bf P}^1$).
In order to get the converse implication~(\ref{fusionsubseq})$\implies$(\ref{fusiongeom}),
we want to prove that
$\eta':=\left(\eta_{j_k}\right)_{k\geq1}$ 
does not make converge its associated interpolation formula
$E_N\left(\cdot;\eta'\right)$ for entire functions, i.e.
there exists (at least) one function $f\in\mathcal{O}\left(\C^2\right)$
such that $E_N\left(f;\eta'\right)$ does not converge to $f$
(uniformly on any compact subset $K\subset\C^2$).

Let be $\zeta_0=\lim_{k\rightarrow+\infty}\eta_{j_k}$.
If $\zeta_{0}$ is finite, the required assertion follows by Proposition~\ref{specialequiv}.
Otherwise, $\zeta_0=\infty$ and by Remark~\ref{etajkn0},
one also has that
$\eta_{j_k}\neq0$ for all $k\geq1$. It follows that the sequence
$\theta':=\left(\theta_{j_k}\right)_{k\geq1}$ where
\begin{eqnarray*}
\theta_{j_k}
& := &
\frac{1}{\eta_{j_k}}
\;
\mbox{ for all }
k\geq1
\,,
\end{eqnarray*}
is well-defined (as a subset of $\C$), bounded and converges to $0$.
On the other hand, $\theta_{j_k}=h_0\left(\eta_{j_k}\right)$
for all $k\geq1$, where $h_0$ is the homographic transformation
defined as $h_0(\zeta)=1/\zeta$ (see~(\ref{homog}) with the choice of $\eta^c=0$).
Thus $\left\{\theta_{j_k}\right\}_{k\geq1}$ is not locally interpolable by 
real-analytic curves (because any homographic transformation
is in particular biholomorphic).
Again, by Proposition~\ref{specialequiv}, the sequence
$\theta'=\left(\theta_{j_k}\right)_{k\geq1}$ does not make converge its
associated interpolation formula
$E_N\left(\cdot;\theta'\right)$ for entire functions, i.e.
there exists $f\in\mathcal{O}\left(\C^2\right)$ such that
$E_N\left(f;\theta'\right)$ does not converge to $f$ (uniformly on any compact subset).
Finally, since $E_N\left(\cdot;\theta'\right)=E_N\left(\cdot;h_0\left(\eta'\right)\right)$,
it follows by Corollary~\ref{lemmextensioENRN} that neither can do
the sequence
$\eta'=\left(\eta_{j_k}\right)_{k\geq1}$ 
for the formula $E_N\left(\cdot;\eta'\right)$ for entire functions, and this proves
the implication~(\ref{fusionsubseq})$\implies$(\ref{fusiongeom}).

\end{proof}

\medskip

\subsection{On the action by permutations}

Now we can give the proof of the second part of Theorem~\ref{fusion}
that is the equivalence between~(\ref{fusiongeom}) and~(\ref{fusionpermut}),
and achieve its whole proof. We first need a specific result that is
a part of the proof for Theorem~1 from~\cite{amadeo4}
(reminded above as Theorem~\ref{theorem1}).

\begin{lemma}\label{criternecessary}

Let be $\left(\eta_j\right)_{j\geq1}$ such that, for all
$f\in\mathcal{O}\left(\C^2\right)$,
$R_N(f;\eta)$ is uniformly bounded on any compact
subset of $\C^2$. Then the estimate~(\ref{criter}) from
Theorem~\ref{theorem1} is satisfied.

\end{lemma}

This result is Lemma~11 from~\cite{amadeo4} and yields the part (\ref{thm12})$\implies$(\ref{thm13})
in the statement of Theorem~\ref{theorem1}. In particular, the important fact
is that no one condition is needed for the set $\left\{\eta_j\right\}_{j\geq1}$ (like boundedness,
see Remark~3.1 from~\cite{amadeo4}).
This will be useful in order to prove the
implication~(\ref{fusionpermut})$\implies$(\ref{fusiongeom})
in the statement of Theorem~\ref{fusion}.

\begin{proof}

The implication~(\ref{fusiongeom})$\implies$(\ref{fusionpermut})
immediately follows by Theorem~\ref{theorem2} since
the property of being locally interpolable by
real-analytic curves is a condition about sets, then
it does not depend on any enumeration of
$\{\eta_j\}_{j\geq1}$.

\medskip

Conversely, let assume that $\{\eta_j\}_{j\geq1}$
(coming from the sequence $\eta=\left(\eta_j\right)_{j\geq1}$) is not locally interpolable by
real-analytic curves. We want to find a permutation $\sigma$ of
$\N\setminus\{0\}$ such that
$E_N\left(\cdot;\sigma(\eta)\right)$ does not converge
for entire functions (where $\sigma(\eta):=\left(\eta_{\sigma(j)}\right)_{j\geq1}$).
We know by Proposition~\ref{norealint} that there are
$\zeta_0\in\C{\bf P}^1$ and
a subsequence $\eta'=\left(\eta_{j_k}\right)_{k\geq1}$ of
$\left(\eta_j\right)_{j\geq1}$ that satisfy
the following conditions:
\begin{eqnarray}\label{remindnorealint}
& &
\begin{cases}
\mbox{the sequence $\left(\eta_{j_k}\right)_{k\geq1}$
converges to $\zeta_0$;}
\\
\mbox{the set $\left\{\eta_{j_k}\right\}_{k\geq1}$
is not locally interpolable by real-analytic curves.}
\end{cases}
\end{eqnarray}

First, we claim that can w.l.o.g. assume that $\zeta_0$ is finite. Indeed, if $\zeta_0=\infty$, 
let consider 
$\eta^c\notin\left\{\eta_j\right\}_{j\geq1}\bigcup\{\infty\}$,
$h_{\eta^c}\in\mathcal{O}\left(\C{\bf P}^1\right)$ defined by~(\ref{homog})
and the associated sequence
$\theta=\left(\theta_j\right)_{j\geq1}=\left(h_{\eta^c}\left(\eta_j\right)\right)_{j\geq1}$ 
(that is well-defined by~(\ref{defthetaj})).
Then the subsequence
$\left(\theta_{j_k}\right)_{k\geq1}=\left(h_{\eta^c}\left(\eta_{j_k}\right)\right)_{k\geq1}$ 
satisfies~(\ref{remindnorealint})
with $\zeta_0'=h_{\eta^c}(\infty)=\overline{\eta^c}\in\C$
(because $\left(\eta_{j_k}\right)_{k\geq1}$ does and
$h_{\eta^c}$ is biholomorphic).
It will follow that there will be a permutation $\sigma$ such that
the sequence $\sigma(\theta)=\left(\theta_{\sigma(j)}\right)_{j\geq1}$
does not make converge its associated interpolation formula
$E_N\left(\cdot;\sigma(\theta)\right)$ for entire functions. 
Since 
\begin{eqnarray*}
h_{\eta^c}^{-1}\left(\sigma(\theta)\right)
& = &
h_{\eta^c}^{-1}\left[\left(\theta_{\sigma(j)}\right)_{j\geq1}\right]
\;=\;
\left(h_{\eta^c}^{-1}\left(\theta_{\sigma(j)}\right)\right)_{j\geq1}
\;=\;
\left(h_{\eta^c}^{-1}\left[h_{\eta^c}\left(\eta_{\sigma(j)}\right)\right]\right)_{j\geq1}
\\
& = &
\left(\eta_{\sigma(j)}\right)_{j\geq1}
\;=\;
\sigma(\eta)
\end{eqnarray*}
(notice that $\left\{\left(h_{\eta^c}^{-1}\left(\sigma(\theta)\right)\right)_j\right\}_{j\geq1}
=\left\{h_{\eta^c}^{-1}\left(\theta_j\right)\right\}_{j\geq1}$ is well-defined as a subset of $\C$
by Lemma~\ref{hu-1}),
an application
of Corollary~\ref{lemmextensioENRN} (which is possible because
$h_{\eta^c}^{-1}=h_{\overline{\eta^c}}$ by Lemma~\ref{hu-1}) will allow us to deduce that
neither will do the sequence $h_{\eta^c}^{-1}\left(\sigma(\theta)\right)=\sigma(\eta)$
for $E_N\left(\cdot;\sigma(\eta)\right)$ for entire functions, i.e.
there will exist (at least) one function $f\in\mathcal{O}\left(\C^2\right)$ such that
$E_N(f;\sigma(\eta))$ will not converge to $f$
(uniformly on any compact subset $K\subset\C^2$). This
will prove the required  
implication~(\ref{fusionpermut})$\implies$(\ref{fusiongeom})
of the theorem for the case $\zeta_0=\infty$
and complete the whole proof of the equivalence
between~(\ref{fusiongeom}) and~(\ref{fusionpermut}) in the general case.

\medskip

We can then assume that $\zeta_0\in\C$ in~(\ref{remindnorealint}). Let
fix the enumeration of the associated subsequence
$\eta'=\left(\eta_{j_k}\right)_{k\geq1}$ as well as the canonical one for the complementary subsequence
\begin{eqnarray}\label{defeta''}
\eta''
\;=\;
\left(\eta_{r_m}\right)_{m\geq1}
\;:=\;
\left(\eta_j\right)_{j\geq1}
\setminus
\left(\eta_{j_k}\right)_{k\geq1}
\,.
\end{eqnarray}
Since $\eta'=\left(\eta_{j_k}\right)_{k\geq1}$ is a (bounded) convergent sequence
that is not locally interpolable by real-analytic curves, it follows by Proposition~\ref{specialequiv}
that $E_N\left(\cdot;\eta'\right)$
cannot converge for entire functions. By an application 
of~(\ref{thm12})$\Longleftrightarrow$(\ref{thm13}) in
Theorem~\ref{theorem1}, it follows
that the sequence of the associated divided differences is not exponentially bounded,
i.e. $\forall\,R\geq1$, $\exists\,p_R,\,q_R\geq0$ such that
\begin{eqnarray}\label{notcriter}
\left|
\Delta_{p_R,(\eta_{j_{p_R}},\ldots,\eta_{j_1})}
\left[
\left(
\frac{\overline{\zeta}}{1+|\zeta|^2}
\right)^{q_R}
\right]
\left(\eta_{j_{p_R+1}}\right)
\right|
& > &
R^{p_R+q_R}
\,.
\end{eqnarray}
In particular, there are $p_1,\,q_1\geq0$ such that
\begin{eqnarray}\label{notcriter0}
\left|
\Delta_{p_1,(\eta_{j_{p_1}},\ldots,\eta_{j_1})}
\left[
\left(
\frac{\overline{\zeta}}{1+|\zeta|^2}
\right)^{q_1}
\right]
\left(\eta_{j_{p_1+1}}\right)
\right|
& \geq &
1
\,.
\end{eqnarray}
We set
\begin{eqnarray}\label{sigma1}
\sigma(k)
\;=\;
j_k
\;\mbox{ for all }\;
k=1,\ldots,p_1+1
& \mbox{ and } &
\sigma(p_1+2)
\;=\;
r_1
\,.
\end{eqnarray}
Then $\sigma$ is injective on the first $(p_1+2)$ indices,
$1$ is attained since $1\in\left\{j_1\,,\,r_1\right\}
\subset\sigma\left(\left\{1,\ldots,p_1+1,p_1+2\right\}\right)$
and~(\ref{notcriter0}) can be rewritten as
\begin{eqnarray}\label{notcriter1}
\left|
\Delta_{p_1,(\eta_{\sigma(p_1)},\ldots,\eta_{\sigma(1)})}
\left[
\left(
\frac{\overline{\zeta}}{1+|\zeta|^2}
\right)^{q_1}
\right]
\left(\eta_{\sigma(p_1+1)}\right)
\right|
& \geq &
1
\,.
\end{eqnarray}

The permutation $\sigma$ will be constructed by induction on $m\geq1$.
We first set
\begin{eqnarray}\label{convp0}
p_0
& := &
-2
\,,
\end{eqnarray}
and we assume having defined $\sigma$ on $\left\{1,\ldots,p_m+2\right\}$
where 
\begin{eqnarray}\label{pm-1pm}
p_{l-1}+2\;\leq\;p_l
& \mbox{ for all } &
l=1,\ldots,m
\,,
\end{eqnarray}
as follows: for all
$l=1,\ldots,m$,
\begin{eqnarray}\label{sigmam}
\sigma(k)
& = &
\begin{cases}
j_{k-l+1}
\;\mbox{ for all }\;
k=p_{l-1}+3,\ldots,p_l+1
\,,
\\
r_l
\;\mbox{ if }\;
k=p_l+2
\,.
\end{cases}
\end{eqnarray}
We also assume that for all $l=1,\ldots,m$,
\begin{eqnarray}\label{notcriterm}
\left|
\Delta_{p_l,(\eta_{\sigma(p_l)},\ldots,\eta_{\sigma(1)})}
\left[
\left(
\frac{\overline{\zeta}}{1+|\zeta|^2}
\right)^{q_l}
\right]
\left(\eta_{\sigma(p_l+1)}\right)
\right|
& \geq &
l^{p_l+q_l}
\,.
\end{eqnarray}
We indeed check that~(\ref{pm-1pm}) is fulfilled for $m=1$
since $p_1\geq0$ and $p_0=-2$ by~(\ref{convp0}).
Similarly, (\ref{sigmam}) (resp.~(\ref{notcriterm}))
is satisfied for $m=1$ by~(\ref{convp0}) and~(\ref{sigma1})
(resp. by~(\ref{notcriter1})).

Now let consider the sequence 
$\eta^{(m)}=\left(\eta_k^{(m)}\right)_{k\geq1}$ defined as follows:
\begin{eqnarray}\label{sigmam+10}
\eta_k^{(m)}
& := &
\begin{cases}
\eta_{\sigma(k)}
\;\mbox{ for all }\;
k=1,\ldots,p_m+2
\,,
\\
\eta_{j_{k-m}}
\;\mbox{ for all }\;
k\geq p_m+3
\,.
\end{cases}
\end{eqnarray}
Since $\left\{\eta^{(m)}\right\}$ (as a set) is the union of 
$\left\{\eta_{j_k}\right\}_{k\geq1}$ and the finite set
$\left\{\eta_{r_1},\ldots,\eta_{r_m}\right\}$ by~(\ref{sigmam}),
the sequence $\eta^{(m)}$ is bounded and satisfies~(\ref{remindnorealint}) as well
(with the same limit point $\zeta_0$).
Again, by successive applications of Proposition~\ref{specialequiv}
and~(\ref{thm12})$\Longleftrightarrow$(\ref{thm13}) from Theorem~\ref{theorem1}, 
it follows that $\eta^{(m)}$ satisfies~(\ref{notcriter}).
In particular, with the choice of $R=m+1$, there are
$p_{m+1},\,q_{m+1}\geq0$ such that
\begin{eqnarray}\label{notcriterm+1}
& &
\;\;\;\;
\left|
\Delta_{p_{m+1},\left(\eta^{(m)}_{p_{m+1}},\ldots,\eta^{(m)}_{1}\right)}
\left[
\left(
\frac{\overline{\zeta}}{1+|\zeta|^2}
\right)^{q_{m+1}}
\right]
\left(\eta^{(m)}_{p_{m+1}+1}\right)
\right|
\,\geq\,
(m+1)^{p_{m+1}+q_{m+1}}
.
\end{eqnarray}
In addition, we can choose 
$p_{m+1}\geq p_m+2$
(this will satisfy~(\ref{pm-1pm}) for all $l=1,\ldots,m+1$).
Indeed, if it were not possible, this would mean that
for all $R\geq m+1$, the associated $p_R$ should be bounded. 
By Corollary~\ref{corollairedulemme}, so would be all the terms
$\left|\Delta_{{p_R},\left(\eta^{(m)}_{p_R},\ldots,\eta^{(m)}_1\right)}
\left[\left(\dfrac{\overline{\zeta}}{1+|\zeta|^2}\right)^q\right]\left(\eta^{(m)}_{p_R+1}\right)\right|$ 
for all $q\geq0$, and this would contradict~(\ref{notcriter}) for $\eta^{(m)}$.
We can then extend $\sigma$ to $\left\{1,\ldots,p_{m+1}+2\right\}$ as follows:
\begin{eqnarray}\label{sigmam+1}
\sigma(k)
& = &
\begin{cases}
j_{k-m}
\;\mbox{ for all }\;
k=p_m+3,\ldots,p_{m+1}+1
\,,
\\
r_{m+1}
\;\mbox{ if }\;
k=p_{m+1}+2
\,.
\end{cases}
\end{eqnarray}
The induction hypothesis~(\ref{sigmam}) and~(\ref{sigmam+1}) show that $\sigma$ is well-defined on
$\left\{1,\ldots,p_{m+1}+2\right\}$. Moreover,
one has by~(\ref{sigmam+10}) and~(\ref{sigmam+1}) that
$\eta_{\sigma(k)}=\eta^{(m)}_{k}$ for all $k=1,\ldots,p_{m+1}+1$,
then it follows by~(\ref{notcriterm+1}) that~(\ref{notcriterm}) is still satisfied
for $l=m+1$. This last assertion with the induction hypotheses~(\ref{sigmam})
and~(\ref{notcriterm}) complete the case for $m+1$,
i.e. (\ref{sigmam}) and~(\ref{notcriterm}) are still satisfied for all
$l=1,\ldots,m+1$.

\medskip

The sequence $\left(p_m\right)_{m\geq1}$ constructed above allows us to
define $\sigma$ for all $k\geq1$ by~(\ref{sigmam}) since we have the following partition
from~(\ref{convp0}) and~(\ref{pm-1pm}),
\begin{eqnarray*}
\N\setminus\{0\}
& = &
\bigcup_{m\geq1}
\left\{k\,,\;
p_{m-1}+3\,\leq\,k\,\leq\,p_m+2
\right\}
\,.
\end{eqnarray*}
Next, $\sigma$ is a permutation of $\N\setminus\{0\}$: indeed, 
it follows from~(\ref{pm-1pm}) that every set
$\left\{k\,,\;
p_{m-1}+3\,\leq\,k\,\leq\,p_m+2
\right\}$ contains at least two elements, i.e. $\sigma$ attains by~(\ref{sigmam})
exactly one of the type
$r_m$ and at least one of the type $j_k$ as well.
On the other hand, one has by~(\ref{sigmam}) again that for all $m\geq1$,
$\sigma\left(p_m+1\right)=j_{p_m-m+2}$ and
$\sigma\left(p_m+3\right)=j_{p_m+3-(m+1)+1}=j_{p_m-m+3}$.
This last assertion and~(\ref{sigma1}) together show that all the $j_k$'s (resp.
$r_m$'s) are reached exactly once.

Finally, the estimate~(\ref{notcriterm}) being satisfied for all $m\geq1$
(i.e. this contradicts the estimate~(\ref{criter}) from
Theorem~\ref{theorem1}), it follows by
an application of Lemma~\ref{criternecessary} that there is
$f\in\mathcal{O}\left(\C^2\right)$ such that $R_N(f;\sigma(\eta))$
cannot be uniformly bounded (on any compact subset $K\subset\C^2$).
In particular, $R_N(f;\sigma(\eta))$ cannot even converge to $0$, then by Lemma~\ref{ENRN},
$E_N(f;\sigma(\eta))$ does not converge to $f$ (uniformly on any
compact subset $K\subset\C^2$). This achieves the
implication~(\ref{fusionpermut})$\implies$(\ref{fusiongeom})
from Theorem~\ref{fusion} and completes its whole proof.

\end{proof}

\medskip

\subsection{Proof of Corollaries~\ref{corollaireducorollaire} and~\ref{velocitat}}

In order to prove Corollary~\ref{corollaireducorollaire}, we first remind the following 
auxiliary result that is Lemma~8 from~\cite{amadeo4}.

\begin{lemma}\label{ballentire}

Let $r_0>0$ be fixed.
If there is $\varepsilon_{\eta}>0$ such that, $\forall\,f\in\mathcal{O}\left(B_2(0,r_0)\right)$,
$R_N(f;\eta)$ converges to $0$ uniformly on any compact subset of
$B_2(0,\varepsilon_{\eta}r_0)$, then
$\forall\,g\in\mathcal{O}\left(\C^2\right)$,
$R_N(g;\eta)$ converges to $0$ uniformly on any compact subset
of $\C^2$.

\end{lemma}

We can then give the proof of Corollary~\ref{corollaireducorollaire}.

\begin{proof}

First, as for the proof of Theorem~\ref{fusion},
the implication~(\ref{corolgeom})$\implies$(\ref{corolpermut})
(resp. (\ref{corolgeom})$\implies$(\ref{corolsubseq}))
from the statement of the corollary  immediately follows by
Theorem~\ref{theorem2} since, if
$\left\{\eta_j\right\}_{j\geq1}$ is locally interpolable by
real-analytic curve, then so will be the (same)
set $\left\{\sigma(\eta)\right\}=\left\{\eta_{\sigma(j)}\right\}_{j\geq1}$ for all
$\sigma\in\mathfrak{S}_{\N}$
(resp. the subset $\left\{\eta_{j_k}\right\}_{k\geq1}$
coming from any subsequence
$\left(\eta_{j_k}\right)_{k\geq1}$).

Conversely, in order to prove the implication~(\ref{corolpermut})$\implies$(\ref{corolgeom})
(resp.~(\ref{corolsubseq})$\implies$(\ref{corolgeom})),
let fix $\sigma\in\mathfrak{S}_{\N}$
(resp. $\eta'=\left(\eta_{j_k}\right)_{k\geq1}$)
and $g\in\mathcal{O}\left(\C^2\right)$.
The hypothesis~(\ref{corolpermut}) (resp.~(\ref{corolsubseq}))
and Lemma~\ref{ENRN} imply that
for all $f\in\mathcal{O}\left(B_2(0,r_0)\right)$,
$R_N(f;\sigma(\eta))$ (resp. $R_N\left(f;\eta'\right)$) converges
to $0$ uniformly on any compact subset
$K\subset B_2\left(0,\varepsilon_{\eta}r_0\right)$.
It follows by Lemma~\ref{ballentire} that in particular
$R_N(g;\sigma(\eta))$ (resp. $R_N\left(g;\eta'\right)$)
converges to $0$ uniformly on any compact subset
$K\subset\C^2$. Again, by an application of  Lemma~\ref{ENRN},
one can deduce that $E_N(g;\sigma(\eta))$
(resp. $E_N\left(g;\eta'\right)$) converges to $g$
uniformly on any compact subset $K\subset\C^2$.

Finally, $\sigma\in\mathfrak{S}_{\N}$
(resp. $\eta'=\left(\eta_{j_k}\right)_{k\geq1}$)
and $g\in\mathcal{O}\left(\C^2\right)$ being arbitrary,
the condition~(\ref{fusionpermut}) (resp.~(\ref{fusionsubseq})) from the statement
of Theorem~\ref{fusion} is satisfied, whose application yields
the required assertion~(\ref{corolgeom}).

\end{proof}

Lastly, the proof of Corollary~\ref{velocitat} immediately follows
by an application of Theorem~\ref{theorem2} (or
Theorem~3 from~\cite{amadeo4}) and Corollary~1 from~\cite{amadeo4}
since the set $\left\{\eta_{\sigma(j)}\right\}_{j\geq1}$
(resp. $\left\{\eta_{j_k}\right\}_{k\geq1}$) is still locally
interpolable by real-analytic curves.

\bigskip


\begin{thebibliography}{00}




%\bibitem{alabut} C. Alabiso, P. Butera, 
%$N$-variable rational approximants and method of moments,
%{\em J. Mathematical Phys.} {\bf 16} (1975), 840--845. 



%\bibitem{bergman} S. Bergman, \"{U}ber ausgezeichnete Randfl\"{a}chen in der Theorie der Functionen 
%von Zwei komplexen Ver\"{a}nderlichen,  {\em Math. Ann.} {\bf 104}  (1931), 611--636.



\bibitem{berndtsson} B. Berndtsson,
A formula for interpolation and division in $\mathbb{C}^n$,
{\em Math. Ann.} {\bf 263} (1983), 399--418. 



\bibitem{boor} C. de Boor, Divided differences,
{\em Surveys in Approximation Theory} {\bf 1} (2005), 46--69.



%\bibitem{colherr} N. Coleff, M. Herrera,
%Les courants r{\'e}siduels associ{\'e}s {\`a} une forme m{\'e}romorphe (French),
%{\em Lecture Notes in Mathematics}, {\bf 633}, Springer, Berlin (1978).


\bibitem{devore} R.A. DeVore, G.G. Lorentz, {\em Constructive Approximation},
Springer-Verlag (1993).



\bibitem{guelfond} A.O. Guelfond, {\em Calcul des Diff\' erences Finies} (French),
Dunod (1963).



%\bibitem{fuks}  B.A. Fuks, Special chapters in the theory of analytic functions 
%of several complex variables (In Russian), {\em Amer. Math. Soc.} (1965). 



%\bibitem{gelfand} I. Gel'fand, D. Raikov, G. Shilov,   
%{\em Commutative normed rings} (In Russian), Chelsea (1964). 



%\bibitem{hensha1} G.M. Henkin, A.A. Shananin,
%Bernstein theorems and Radon transform. Application to the theory
%of production functions, {\em Transl. Math. Monogr.} {\bf 81} (1990), 189--223.



\bibitem{hensha2} G.M. Henkin, A.A. Shananin,
$\mathbb{C}^n$-capacity and multidimensional moment problem, 
{\em Notre Dame Math. Lectures} {\bf 12} (1992), 69--85.




%\bibitem{herrlieb} M. Herrera, M. Lieberman,
%Residues and principal values on complex spaces, 
%{\em Math. Ann.} {\bf 194} (1971), 259--294.



%\bibitem{hormander} L. H\"{o}rmander, {\em An introduction to complex analysis in several variables} (1966).



\bibitem{amadeo3} A. Irigoyen, An approximation formula for holomorphic functions
by interpolation on the ball (2008), http://arxiv.org/abs/0803.4178



\bibitem{amadeo4} A. Irigoyen, 
A criterion for the explicit reconstruction
of a holomorphic function from its restrictions on lines,
{\em Journal of Mathematical Analysis and Applications},
{\bf 420 (1)} (2014), 705--736.



\bibitem{irigoyen5} A. Irigoyen, An application of reconstruction formulas 
by interpolation methods in the Radon transform theory, {\em in progress}.



%\bibitem{kolmogorov} A.N. Kolmogorov, V.M. Tihomirov,
%$\varepsilon$-entropy and $\varepsilon$-capacity of sets in function spaces (Russian),
%{\em Amer. Math. Soc. Transl.}, {\bf 17}, 1961.



%\bibitem{loganshepp} B.F. Logan, L.A. Shepp, Optimal reconstruction of a function from its projections,
%{\em Duke Math. J.} {\bf 42} (1975), 645--659.



\bibitem{martineau} A. Martineau,
\' Equations diff\' erentielles d'ordre infini, 
{\em Bull. Soc. Math. France}, {\bf 95} (1967), 109--154.



%\bibitem{scv} {\em Several Complex Variables I:
%Introduction to Complex Analysis},
%A.G. Vitushkin (ed.), Berlin: Springer (1990).



\bibitem{steffensen1} J.F. Steffensen, {\em Interpolation}, 
Williams and Wilkins, Baltimore (1927).



%\bibitem{steffensen2} J.F. Steffensen, Note on divided differences,
%{\em Danske Vid. Selsk. Math.-Fys. Medd.} {\bf 17 (3)} (1939), 1--12.



%\bibitem{vitushkin} A.G. Vitushkin, {\em Theory of the transmission and processing of information}, Pergamon Press (1961).



\end{thebibliography}
\end{document}